\newcommand{\ctau}{\tau^{(2)}}
\newcommand{\cnu}{\nu^{(2)}}
\newtheorem{theorem}{Theorem}[section]
\newtheorem{lemma}[theorem]{Lemma}
\newtheorem{conjecture}[theorem]{Conjecture}
\newtheorem{corollary}[theorem]{Corollary}
\newtheorem{claim}{Claim}[theorem]
\newtheorem{observation}[theorem]{Observation}
\newtheorem{notation}[theorem]{Notation}
\newtheorem{definition}[theorem]{Definition}
\newtheorem{question}[theorem]{Question}
\theoremstyle{remark}
\newtheorem{remark}[theorem]{Remark}
\newcommand{\refT}[1]{Theorem~\ref{#1}}
\newcommand{\refC}[1]{Corollary~\ref{#1}}
\newcommand{\refCl}[1]{Claim~\ref{#1}}
\newcommand{\refL}[1]{Lemma~\ref{#1}}
\newcommand{\refS}[1]{Section~\ref{#1}}
\newcommand{\refCon}[1]{Conjecture~\ref{#1}}
\newcommand{\refOb}[1]{Observation~\ref{#1}}
\newcommand{\Rmnum}[1]{\expandafter\@slowromancap\romannumeral #1@}
\newcommand{\cI}{\Rmnum{1}}
\newcommand{\cII}{\Rmnum{2}}
\newcommand{\cO}{O}
\newcommand{\cT}{T}
\let\OLDthebibliography\thebibliography
\renewcommand\thebibliography[1]{
	\OLDthebibliography{#1}
	\setlength{\parskip}{0pt}
	\setlength{\itemsep}{0pt plus 0.3ex}
}
\title{$2$-covers of wide Young diagrams}
\author{Ron Aharoni\thanks{Faculty of Mathematics, Technion, Haifa 32000, Israel. E-mail: {\tt raharoni@gmail.com}}, Eli Berger\thanks{Department of Mathematics, University of Haifa, Haifa 31905,  Israel.  Email: {\tt berger.haifa@gmail.com}}, He Guo\thanks{Department of Mathematics and Mathematical Statistics, Ume\r{a} University, Ume\r{a} 90187, Sweden.  E-mail: {\tt he.guo@umu.se}. Research supported by the Kempe Foundation grant JCSMK23-0055.}, and Daniel Kotlar\thanks{Computer Science Department, Tel-Hai College, Upper Galilee 12210, Israel. Email: {\tt dannykot@telhai.ac.il}}}
\date{November 2023}
\begin{document}
	
	\maketitle
	\begin{abstract}
		A Young diagram $Y$ is called \emph{wide} if every sub-diagram $Z$ formed by a subset of the rows of $Y$ dominates $Z'$, the conjugate of $Z$. A Young diagram~$Y$ is called \emph{Latin} if there exists 
		an assignment of numbers to its squares, that is injective in each row and each column, and the $i$th row is assigned the numbers $1, \ldots ,a_i$, where $a_i$ is the length of the $i$th row. 
		A conjecture of Chow and Taylor, appearing in~\cite{chow}, is that a wide Young diagram is Latin.
		We prove a weaker dual version of the conjecture. 
	\end{abstract}

	\section{Introduction}
	Young diagrams are a way of representing partitions of natural numbers. Given a partition  $n=a_1+a_2+\cdots + a_m$ of a number $n$, where $a_1 \ge a_2 \ge \cdots \ge a_m\ge 0$, the corresponding Young diagram~$Y$ is formed by $m$ rows of squares that are left-aligned and the $i$th row consists of $a_i$ squares. The size $n$ of~$Y$ is denoted by~$|Y|$.
	The $i$th row (resp. column) of $Y$ is denoted by $r_i(Y)$ (resp. $c_i(Y)$).
	We write $a_i(Y)$ for $|r_i(Y)|$ and $b_i(Y)$ for $|c_i(Y)|$. 
	When the identity of $Y$ is clear from the context we omit its mention, and write $r_i, c_i, a_i, b_i$.

	The conjugate $Y'$ of a Young diagram $Y$ is obtained by transposing the diagram. In terms of partitions, this is the partition $n=b_1+b_2+\cdots+b_{a_1}$ with $b_1 \ge b_2\ge \cdots \ge b_{a_1}\ge 0$, where $b_j=|c_j|$, the length of the $j$th column of~$Y$.

	A Young diagram $X$ \emph{dominates} a diagram $Y$ partitioning the same $n$ if 
	\[\sum_{i=1}^k a_i(X)\ge \sum_{i=1}^ka_i(Y)\] 
	for every $k\ge 1$, where partitions are extended by appending zero parts at the end as necessary.
	
	\begin{definition}
		A Young diagram is said to be {\em wide} if for every subset $S$ of its rows, the diagram $Z$ formed by $S$  dominates $Z'$.      
	\end{definition}
	
	\begin{definition}
		A {\em filling} of a Young diagram $Y$ is an assignment of a number $y(i,j)$ to each square $(i,j)$ that is in the $i$th row and $j$th column of $Y$, satisfying that for each $i$, the assignment is an injection from the $a_i$ many squares in row $i$ to $[a_i]=\{1,\dots, a_i\}$.
		A filling is \emph{Latin} if it is injective also in each column.  
		A Young diagram is {\em Latin} if it has a Latin filling.
	\end{definition}

	In \cite{chow} the following is ascribed to Victor Reiner. (Later we prove a strengthening \refT{thm:converse}.) 
	\begin{theorem}\label{reiner}
		A Latin Young diagram is wide.
	\end{theorem}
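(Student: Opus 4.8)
The plan is to reduce to a single Latin diagram and then double-count, for each threshold $k$, the squares whose entry is at most $k$.

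First I would observe that the Latin property passes to sub-diagrams. If $Y$ is Latin and $S$ is a subset of its rows, then restricting a Latin filling of $Y$ to the rows in $S$ is still a bijection from row $i$ onto $[a_i]$, and it remains injective in each column (a column of the sub-diagram is a subset of a column of $Y$); reordering the rows of $S$ into weakly decreasing order to obtain the Young diagram $Z$ does not affect this. Hence $Z$ is Latin, and it suffices to prove that every Latin Young diagram $Z$ dominates its conjugate $Z'$.

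So let $Z$ have row lengths $a_1\ge\cdots\ge a_m$ and column lengths $b_1\ge\cdots\ge b_{a_1}$, and fix a Latin filling. For a fixed $k$ I count the squares of $Z$ with entry $\le k$ in two ways. By rows: row $i$ receives exactly the values $1,\dots,a_i$, hence contributes exactly $\min(a_i,k)$ such squares, for a total of $\sum_{i}\min(a_i,k)$. By columns: column $j$ has $b_j$ squares carrying pairwise distinct values, hence contributes at most $\min(b_j,k)$ such squares, for a total of at most $\sum_{j}\min(b_j,k)$. Therefore
\[ \sum_{i=1}^{m}\min(a_i,k)\ \le\ \sum_{j=1}^{a_1}\min(b_j,k)\qquad\text{for every }k\ge 1. \]
Now I invoke the elementary conjugation identity $\sum_{i=1}^{k}\lambda_i=\sum_{j}\min(\lambda'_j,k)$, valid for any partition $\lambda$ (both sides count pairs $(i,j)$ with $i\le k$ and $\lambda'_j\ge i$). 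Applying it with $\lambda=Z'$ on the left-hand side and with $\lambda=Z$ on the right-hand side rewrites the displayed inequality as $\sum_{i=1}^{k}b_i\le\sum_{i=1}^{k}a_i$ for all $k$, which is precisely the assertion that $Z$ dominates $Z'$. Since $S$ was an arbitrary subset of the rows of $Y$, this shows $Y$ is wide.

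I do not anticipate a serious obstacle: the only genuinely non-mechanical step is recognizing that thresholding the entries at $k$ is the right counting device and pairing it with the conjugate partial-sum identity. Minor care is needed with zero parts when extending partitions, but $\min(0,k)=0$ makes this harmless.
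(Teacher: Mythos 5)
Your proof is correct, but it takes a genuinely different route from the paper. The paper never proves this statement directly: it derives it from the strengthening Theorem~\ref{thm:converse}, namely that $\tau^{(2)}(H(Y))=|Y|$ already forces wideness. Concretely, a Latin filling yields a $2$-matching of size $|Y|$ in the tripartite hypergraph $H(Y)$, so $\nu^{(2)}(H(Y))=|Y|$, whence $\tau^{(2)}(H(Y))=|Y|$ by \eqref{eq:tauknukrelation} and Observation~\ref{ob:tau2leY}; the implication $\tau^{(2)}(H(Y))=|Y|\Rightarrow Y$ wide is then proved in contrapositive form (Theorem~\ref{thm:inverse}) by exhibiting, when dominance fails at some index $k$, a cheap $2$-cover built from $Q=[k]\times[a_k]$ together with K\"onig-type row covers (Corollary~\ref{cor:QtoP} and Lemma~\ref{lemma:nupq}). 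Your argument bypasses the hypergraph formulation entirely: you restrict the Latin filling to an arbitrary subset of rows, count squares with entry at most $k$ once by rows (exactly $\sum_i\min(a_i,k)$, since each row is a bijection onto $[a_i]$) and once by columns (at most $\sum_j\min(b_j,k)$, by column-injectivity), and then translate via the conjugation identity $\sum_{i\le k}\lambda_i=\sum_j\min(\lambda'_j,k)$ into the dominance inequalities; all steps check out, including the preservation of the Latin property under taking row subsets. What each approach buys: yours is shorter, self-contained and purely combinatorial, but proves only the implication Latin $\Rightarrow$ wide; the paper's detour through $\tau^{(2)}$ is heavier but yields the strictly stronger statement that the (a priori weaker) cover condition $\tau^{(2)}(H(Y))=|Y|$ already implies wideness, which combined with Theorem~\ref{thm:widetau2=n} gives the exact characterization of wideness used throughout the paper.
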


	A conjecture attributed in \cite{chow} to Chow and Taylor is that the converse is also true:
	\begin{conjecture}\label{chowconj}
		If a Young diagram $Y$ is wide then it is Latin.
	\end{conjecture}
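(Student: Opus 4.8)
The plan is to translate the statement into one about partitioning the cells of $Y$ into partial permutation matrices, and then to attempt an induction in which one such matrix is stripped off at a time. A Latin filling of $Y$ is exactly an edge-colouring of the bipartite \emph{cell graph} $G(Y)$ — one vertex for each row, one for each column, and one edge $e_{ij}$ for each cell $(i,j)$ of $Y$ — whose colour classes are matchings $M_1,\dots,M_{a_1}$ such that $M_v$ uses exactly one cell in each of the rows $1,\dots,b_v$ and at most one cell in each column (here $b_v=|c_v(Y)|$, and the value written in a cell is its colour). That row $i$ must receive precisely $\{1,\dots,a_i\}$ is then forced by counting, since row $i$ has $a_i$ cells. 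So Conjecture~\ref{chowconj} becomes: every wide $Y$ admits such a decomposition — a characterisation whose other half is exactly Theorem~\ref{reiner}, since a non-wide $Y$ admits no Latin filling and hence no such decomposition.

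The induction would be on $|Y|$, equivalently one strips off the largest value. Let $b=b_{a_1}$ be the number of rows of maximal length $a_1$; these rows form a $b\times a_1$ rectangle at the top of $Y$. The value $a_1$ must occupy exactly rows $1,\dots,b$, and in each of those rows only columns $1,\dots,a_1$ are available, so the colour class $M_{a_1}$ must be a $b$-cell partial permutation matrix inside that rectangle. Such a matrix exists because wideness applied to those $b$ rows forces, already at $k=1$, the inequality $a_1\ge b$, which is exactly Hall's condition for matching the $b$ rows (each with full availability $\{1,\dots,a_1\}$) into distinct columns. One then removes $M_{a_1}$, left-justifies (the top $b$ rows shrink from $a_1$ to $a_1-1$, and $a_1-1\ge a_{b+1}$, so a Young diagram $Y^-$ results), and hopes to recurse. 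This single-diagram matching step is the ``soft'' use of wideness, and is where a defect or topological Hall argument à la Aharoni--Haxell can be substituted if one wants to prescribe \emph{which} matching is removed (e.g.\ a ``rightmost'' one); a Galvin-type list-edge-colouring reformulation of the residual problem is another natural line to pursue here.

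The hard part will be that this induction does not close: deleting $M_{a_1}$ need not leave $Y^-$ wide, nor even leave it fillable at all. The extreme case is $a_1=b$, when the top block is a square and $M_{a_1}$ must be a full transversal of it; after deletion and left-justification the top rows all have length $b-1$, and the $b\times(b-1)$ rectangle they span fails domination at $k=1$ and has no Latin filling whatsoever — even though the original square does. So the genuine content of the conjecture lies in choosing $M_{a_1}$ together with a global invariant strictly weaker than wideness that is actually preserved, and controlling all the domination inequalities of all row-subsets of $Y^-$ through a single matching deletion is exactly the step I expect to get stuck on — presumably why the full conjecture is still open. What \emph{is} reachable by these methods is a robust relaxation: if one only asks for a \emph{$2$-cover} — each cell receiving two values, row $i$ receiving each of $1,\dots,a_i$ twice, and each value repeated at most twice per column — then the object peeled off at each stage is a $2$-matching rather than a perfect matching, its existence follows from a half-integral matching that survives deletion, and the square-block pathology evaporates because a $b\times(b-1)$ rectangle does carry such a $2$-cover. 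I would therefore carry out the reformulation and the single-diagram Hall lemma in general, establish the weaker ($2$-cover, dual) statement along these lines, and only then try to sharpen ``$2$-matching surviving deletion'' to ``perfect matching surviving deletion'', which is the crux of Conjecture~\ref{chowconj}.
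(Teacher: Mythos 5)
First, note that the statement you were asked about is \refCon{chowconj}, the Wide Partition Conjecture, which the paper does \emph{not} prove --- it is stated as open, and the paper establishes only the dual relaxation \refT{thm:widetau2=n} (wideness implies $\tau^{(2)}(H(Y))=|Y|$) together with its converse. Your write-up is candid about this: you reformulate a Latin filling as a decomposition of the cell graph into matchings $M_1,\dots,M_{a_1}$ with $M_v$ covering rows $1,\dots,b_v$ (correct), you observe that wideness gives $a_1\ge b_{a_1}$ so the top colour class exists (correct), and you then correctly identify the standard obstruction: peeling off $M_{a_1}$ need not preserve wideness, the $b\times b$ square being the canonical failure. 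So as a proof of the conjecture the proposal has an acknowledged, fatal gap --- the induction does not close, and no substitute invariant is supplied. That is not a criticism of your reasoning so much as a confirmation that you have rediscovered exactly why the problem is hard; but it must be said plainly that nothing here proves \refCon{chowconj}.

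The more substantive issue is with your fallback claim. The ``robust relaxation'' you describe --- each cell receiving two values, each value at most twice per column, obtained by peeling off $2$-matchings of the cell graph --- is \emph{not} the weaker statement the paper proves, and you have not proved it either. The paper's $2$-cover lives in the tripartite hypergraph $H(Y)$ on rows, columns and symbols: it is a set of \emph{pairs} (row--column, row--symbol, or column--symbol) such that every admissible triple $r_ic_js_k$ contains one of them, and \refT{thm:widetau2=n} asserts that no such cover has fewer than $|Y|$ pairs. This is the integral LP-dual of the $2$-matching formulation of the conjecture, not a doubled or half-integral filling, and its proof does not go through any peeling induction: it proceeds by normalising an optimal cover via a shifting/compression argument so that its column--symbol part $Q$ is closed down, computing $\nu(a_i,Q)$ for closed-down $Q$ via the interval decomposition of \refL{lemma:fcQexplicit}, lower-bounding $|Q|$ as in \refL{lem:Q_lower_bound}, and extracting from a hypothetical cover of size less than $|Y|$ a specific set of rows violating domination. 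If you want to salvage a correct weaker result from your approach, you would need either to (a) actually prove your doubled-filling statement (which requires showing the residual diagram after deleting a $2$-matching still admits one --- the same preservation problem in only slightly softened form), or (b) switch to the covering dual, where the difficulty is of a different nature entirely and the square-block pathology genuinely does not arise.
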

	
	In \cite{chow} this is called the Wide Partition Conjecture, or WPC. The original motivation for the conjecture comes from Rota's basis conjecture~\cite{rota}. Replacing the symbols with elements of a matroid and the condition of injectivity by matroid independence yields a generalization of Rota's conjecture. 
	
	The WPC has been open for over 20 years, with little progress toward a solution. A special case that has been solved is that of only two distinct part sizes~\cite[Theorem 3]{chow}. In this paper we study a re-formulation of the problem in terms of hypergraph matchings, and prove a weaker version in which matchings are replaced with covers.

	\subsection{A re-formulation in matching terminology}
	
	Following notation from \cite{az}, a \emph{$k$-matching} in a hypergraph~$H$ is a set of edges in which every two edges share fewer than $k$ vertices (so, a $1$-matching is a classical matching, i.e.,  a set of disjoint edges).
	The $k$th matching 
	number $\nu^{(k)}(H)$ of $H$ is the maximum size of a $k$-matching in~$H$. 
	Let $H^{(k)}$ be the hypergraph whose vertex set is $\{K\subseteq V(H): |K|=k, K\subseteq e\text{ for some $e\in E(H)$}\}$ and whose edge set is $\{ \binom{e}{k}: e \in E(H)\}$. Then a $k$-matching of $H$ is a matching in $H^{(k)}$, so $\nu^{(k)}(H)=\nu(H^{(k)})$, where $\nu(F)$ is the  matching number of a hypergraph~$F$.
	
	A \emph{$k$-cover} of~$H$ is a cover of $H^{(k)}$, namely a set $P$ of $k$-sets representing every edge of $H$, meaning that 
	for every edge~$e$ of~$H$, there exists some $K \in P$ such that $K\subseteq e$. 
	By $\tau^{(k)}(H)$ we denote the minimum size of a $k$-cover of $H$. Again, $\tau^{(k)}(H) = \tau(H^{(k)})$, where $\tau(F)$ is the  covering number of a hypergraph~$F$. 
	$\tau^{(k)}(H)$ is
	the integral LP-dual parameter of $\nu^{(k)}(H)$, hence  
	\begin{equation}\label{eq:tauknukrelation}
		\nu^{(k)}(H) \le \tau^{(k)}(H).
	\end{equation}

	Below we shall only consider the case $k=2$. We say that two sets $e_1,e_2$  are \emph{almost disjoint} if $|e_1\cap e_2|\le 1$.

	\begin{notation}
		\begin{enumerate}[(1)]
			\item Pairs and triples will be often written without delineating parentheses, so we write $ab$ for $\{a,b\}$ and   $abc$ for $\{a,b,c\}$.
			\item Throughout, a (hyper)graph is identified with its set of edges. 
		\end{enumerate}
	\end{notation}

	To a Young diagram $Y$ we assign a tripartite hypergraph $H(Y)$, as follows. Its three respective sides are $R=R(Y)=\{r_1, \ldots ,r_m\}$ (the set of rows), $C=C(Y)=\{c_1, \ldots, c_{a_1}\}$ (the set of columns), and $S=S(Y)=\{s_1, \ldots ,s_{a_1}\}$ (the set of numerical symbols). 
	
	For  $1\le i\le m$ and $r_i\in R$, let
	\begin{equation}\label{eq:def:Hi}
		H_i(Y)=\{r_ic_js_k \mid  c_j\in C, s_k\in S, 1\le j, k \le a_i\}.
	\end{equation}
	Let
	\[H(Y)=\bigcup_{1\le i \le m}H_i(Y).\]

	\begin{observation}\label{ob:tau2leY}
		$\tau^{(2)}(H(Y))\le |Y|$.
	\end{observation}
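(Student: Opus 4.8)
The plan is to exhibit one explicit $2$-cover whose size is at most $|Y|$. The natural candidate is the set of ``row--column'' pairs indexed by the cells of $Y$: put
\[
P=\{\,r_ic_j \;:\; 1\le i\le m,\ 1\le j\le a_i\,\},
\]
so that $P$ has one element for each square of $Y$. First I would check that $P$ is a legitimate set of vertices of $H(Y)^{(2)}$, i.e.\ that each $r_ic_j$ is contained in some edge: if row $i$ is nonempty then $a_i\ge 1$, so $r_ic_js_1\in H_i(Y)$ by \eqref{eq:def:Hi}, and $r_ic_j\subseteq r_ic_js_1$.

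Next I would verify that $P$ covers every edge of $H(Y)$. An arbitrary edge has the form $r_ic_js_k$ with $1\le j,k\le a_i$ for some $i$ (this is exactly the form of the edges of $H_i(Y)$, and $H(Y)$ is their union). In particular $1\le j\le a_i$, so $(i,j)$ is a cell of $Y$ and hence $r_ic_j\in P$; and trivially $r_ic_j\subseteq r_ic_js_k$. Thus every edge of $H(Y)$ contains a member of $P$, so $P$ is a $2$-cover. Finally the pairs $r_ic_j$ are pairwise distinct as $(i,j)$ ranges over the cells, so $|P|=\sum_{i=1}^m a_i=|Y|$, which gives $\tau^{(2)}(H(Y))\le|P|=|Y|$.

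There is essentially no obstacle: the statement is witnessed directly by the construction above, and the only point requiring a word of care is the (immediate) observation that each chosen pair is genuinely a vertex of $H(Y)^{(2)}$, which holds because every nonempty row of $Y$ comes with at least one column and one symbol. I note in passing that one could equally take $P=\{r_is_k:1\le i\le m,\ 1\le k\le a_i\}$ or $P=\{c_js_j : 1 \le j \le a_1\}$-type variants, but the row--column choice is the cleanest for the counting.
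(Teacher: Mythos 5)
Your proof is correct and is exactly the paper's argument: the paper also takes $\{r_ic_j \mid 1\le i\le m,\ 1\le j\le a_i\}$ as the witnessing $2$-cover, and your verification that it covers every edge of each $H_i(Y)$ and has size $|Y|$ is just the spelled-out version of that one-line justification. One caveat on your closing aside: the variant $P=\{c_js_j : 1\le j\le a_1\}$ is \emph{not} a $2$-cover, since an edge $r_ic_js_k$ with $j\neq k$ contains no pair of the form $c_ts_t$; the $\{r_is_k\}$ variant is fine by the $C$--$S$ symmetry, but the diagonal one should be dropped.
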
 This follows from the fact that $\{r_i c_j \mid 1\le i\le m, 1\le j\le a_i \}$ is a 2-cover for $H(Y)$.

	An equivalent formulation of the WPC is:
	\begin{conjecture}\label{conj:main}
		If a Young diagram $Y$ is wide, then $\cnu(H(Y))=|Y|$.  
	\end{conjecture}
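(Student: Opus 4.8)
\medskip
\noindent\textbf{Proof proposal.}
By \eqref{eq:tauknukrelation} and \refOb{ob:tau2leY} one already has $\cnu(H(Y))\le\ctau(H(Y))\le|Y|$, so the entire content of the statement is the reverse inequality $\cnu(H(Y))\ge|Y|$, and I would prove it by exhibiting a $2$-matching of $H(Y)$ with exactly $|Y|$ edges. It helps to first unwind what such a $2$-matching is. Any two edges $r_ic_js_k$ and $r_ic_js_{k'}$ supported on the same cell $(i,j)$ contain the two common vertices $r_i,c_j$, so a $2$-matching has at most one edge on each cell; hence a $2$-matching of size $|Y|$ is exactly a choice of a symbol $s_{\phi(i,j)}$ with $\phi(i,j)\in[a_i]$ for every cell $(i,j)$ of $Y$. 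Two such edges on distinct cells fail to be almost disjoint precisely when they carry the same symbol and lie in a common row or a common column. Thus a $2$-matching of $H(Y)$ of size $|Y|$ is the same object as a \emph{Latin filling} of $Y$, and the task reduces to the original WPC in \refCon{chowconj}: every wide Young diagram admits a Latin filling.

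The plan is an induction that peels off one symbol at a time. In a Latin filling of $Y$ the cells carrying symbol $s_k$ form a partial transversal $T_k$ (at most one cell per row and per column) occupying exactly the rows $r_1,\dots,r_{b_k}$ of length $\ge k$. I would choose such a transversal $T=T_{a_1}$ for the largest symbol so that the remaining configuration $Y\setminus T$ is again Latin-fillable by the induction hypothesis, and then put $s_{a_1}$ on the cells of $T$: this is consistent row by row (a genuinely new symbol in each of $r_1,\dots,r_{b_{a_1}}$) and column by column ($T$ meets each column at most once, and $s_{a_1}$ is used nowhere else in that column). The existence of \emph{some} transversal on $r_1,\dots,r_{b_{a_1}}$ with cells in pairwise distinct columns is a routine Hall/K\"onig matching argument; the real work is to choose $T$ compatibly with the induction. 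Since $Y\setminus T$ is no longer a Young diagram but a punctured/skew shape, this forces one to set up the notions ``wide'' and ``Latin'', and the reduction itself, for a broader class of shapes, and to carry a ``deleting a transversal preserves (generalized) wideness'' lemma as the engine of the induction; the base case (a single column, or a rectangle) is immediate.

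The main obstacle is exactly this preservation step. Wideness is very fragile under deletion of cells: for instance $(3,3,1)$ is wide, yet deleting its last column yields $(2,2,1)$, which is not wide (and, consistently with \refT{reiner}, not Latin); and deleting a symbol class generically destroys the Young-diagram shape altogether. So $T$ must be selected with great care, and proving that a wideness-preserving choice always exists is precisely the still-open core of the WPC. An ostensibly different route — fill the cells column by column, choosing at each column a system of distinct representatives from the lists of symbols still unused in each row, and backtracking (augmenting) along an alternating structure whenever a column cannot be completed — runs into the same wall, since the assertion that wideness rules out the obstruction to such an augmentation is again equivalent to \refCon{chowconj}. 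For this reason the present paper does not settle \refCon{conj:main}; instead it proves the weaker (dual, and fractional) statement obtained by replacing $2$-matchings with $2$-covers, for which \refOb{ob:tau2leY} already gives one inequality and wideness can be used to supply the other.
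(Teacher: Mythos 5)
You have correctly recognized that \refCon{conj:main} is not a theorem of the paper at all: it is an open conjecture, and the only argument the paper attaches to it is the \emph{equivalence} with the Wide Partition Conjecture (\refCon{chowconj}). Your first paragraph reproduces exactly that equivalence, by the same bijection the paper uses --- a $2$-matching of size $|Y|$ picks one edge $r_ic_js_{\phi(i,j)}$ per cell because two edges on the same cell share the pair $r_ic_j$, and almost-disjointness of edges on distinct cells is precisely row- and column-injectivity of $\phi$, so such a $2$-matching is the same thing as a Latin filling. So for the part of the statement that the paper actually establishes, you take essentially the same route, and your claims there are accurate.

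Your second and third paragraphs do not prove anything beyond that, and you say so yourself; that candor is correct. The ``peel off the largest symbol'' induction founders exactly where you say it does: after deleting a transversal the shape is no longer a Young diagram, and the assertion that a transversal can always be chosen so that some suitable generalized wideness survives is not a lemma one can supply by a Hall/K\"onig argument --- it is a reformulation of the open core of the WPC (your $(3,3,1)\to(2,2,1)$ example illustrates the fragility well). So be careful not to present the induction sketch as partial progress; as written it is a restatement of the difficulty, not a reduction of it. The paper's actual contribution on this axis is the weaker pair \refT{thm:widetau2=n} and \refT{thm:converse}: wideness is \emph{equivalent} to $\tau^{(2)}(H(Y))=|Y|$, obtained via shifting $2$-covers and a careful lower bound on $|Q|+\sum_i\nu(a_i,Q)$ for closed-down $Q$, together with the observation (\refOb{ob:tau2leY} plus LP duality) that the matching version would imply the cover version. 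Your final sentence states this relationship correctly.
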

	To see the equivalence, assume there is an assignment $y(i,j)$ satisfying the requirement of~\refCon{chowconj}.  Then 
	$M:=\{r_ic_js_{y(i,j)} \mid 1\le i\le m, 1\le j \le a_i\}$ is a $2$-matching in $H(Y)$ of size $|Y|$.
	To see the almost disjointness, note  that $r_ic_j$ is contained in one edge of~$M$ by the construction; since $y(i,j)$ is injective for fixed~$i$, then $r_is_k$ is contained in at most one edge of~$M$; since the assignment is injective in each column,  $c_js_k$ is contained in at most one edge of~$M$.
	For the other direction, let $M$ be a 2-matching  in $H(Y)$ of size $|Y|=\sum_{i=1}^m a_i$. Then for each $1\le i\le m$ and $1\le j\le a_i$, the pair $r_ic_j$ is contained in exactly one edge $r_ic_js_k$ of $M$. Then we set $y(i,j):=k$. Note that $1\le y(i,j)\le a_i$. Then $y$ is a Latin filling of $Y$, where the row injectivity and column injectivity requirements follow from the fact that $r_is_k$ and $c_js_k$ can each be contained in at most one edge of $M$, respectively. 
	
	In this terminology \refT{reiner} says that if $\nu^{(2)}(H(Y))=|Y|$ then $Y$ is wide. 
	
	Our main result is a weaker version of~\refCon{conj:main}:
	\begin{theorem}\label{thm:widetau2=n}
		If a Young diagram $Y$ is wide, then $\tau^{(2)}(H(Y))=|Y|$. 
	\end{theorem}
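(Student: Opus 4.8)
The plan is to prove the nontrivial inequality $\tau^{(2)}(H(Y))\ge|Y|$ (the reverse being \refOb{ob:tau2leY}) by reducing it to a purely combinatorial statement about unions of combinatorial rectangles, and then to prove that statement by induction on the number of rows. First I would take a minimum $2$-cover $P$ of $H(Y)$ and split it by type, $P=P_{RC}\sqcup P_{RS}\sqcup P_{CS}$, according to whether a pair is of the form $r_ic_j$, $r_is_k$ or $c_js_k$. For each row $i$ let $A_i\subseteq[a_i]$ be the set of columns $j$ with $r_ic_j\notin P$ and $B_i\subseteq[a_i]$ the set of symbols $k$ with $r_is_k\notin P$. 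If $j\in A_i$ and $k\in B_i$ then the edge $r_ic_js_k$ is covered neither by $P_{RC}$ nor by $P_{RS}$, so $c_js_k\in P_{CS}$; hence $A_i\times B_i\subseteq P_{CS}$ for every $i$ (identifying the pair $c_js_k$ with $(j,k)$). Since $|P_{RC}|\ge\sum_i(a_i-|A_i|)$, $|P_{RS}|\ge\sum_i(a_i-|B_i|)$ and $|P_{CS}|\ge\bigl|\bigcup_iA_i\times B_i\bigr|$, we get
\[
|P|\ \ge\ \sum_i(a_i-|A_i|)+\sum_i(a_i-|B_i|)+\Bigl|\bigcup_iA_i\times B_i\Bigr|.
\]
Thus everything reduces to the following statement, which I would isolate as the core lemma: if $Y$ is wide, then
\begin{equation}\tag{$\star$}
\Bigl|\bigcup_{i=1}^m A_i\times B_i\Bigr|\ \ge\ \sum_{i=1}^m\bigl(|A_i|+|B_i|-a_i\bigr)\qquad\text{for all }A_i,B_i\subseteq[a_i].
\end{equation}

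Before proving $(\star)$ I would make two normalizations. Writing $d_i=|A_i|+|B_i|-a_i$, a row with $d_i\le 0$ can be discarded: it only lowers the right-hand side, does not raise the left-hand side, and a sub-diagram of a wide diagram is again wide (its row-subsets are row-subsets of $Y$). So we may assume $d_i\ge 1$ for all $i$. It is worth noting that in this form $(\star)$ for all wide diagrams is equivalent, via Hall's theorem applied to the bipartite "rows versus cells" incidence, to the existence of pairwise disjoint $S_i\subseteq A_i\times B_i$ with $|S_i|=d_i$; but constructing these directly is circular (the Hall condition for that bipartite graph is exactly $(\star)$ for sub-systems), so the proof of $(\star)$ must be a genuinely direct induction rather than an LP/flow argument.

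I would run the induction on $m$. If there is a row $i_0$ for which at least $d_{i_0}$ cells of $A_{i_0}\times B_{i_0}$ lie in no other rectangle $A_i\times B_i$, then delete $i_0$: the remaining rows form a wide diagram with all $d_i\ge 1$, so by induction $\bigl|\bigcup_{i\ne i_0}A_i\times B_i\bigr|\ge\sum_{i\ne i_0}d_i$, and restoring the $\ge d_{i_0}$ new cells of row $i_0$ gives $(\star)$. The hard case is when no row can be removed this way, i.e.\ for every $i$ almost all of $A_i\times B_i$ already lies in the other rectangles — this forces the rectangles to overlap heavily, and is exactly where wideness enters. The key point is that among all rows whose rectangle contains a fixed rectangle $R=A\times B$ with $A,B\subseteq[n]$, each such row has length $\ge n$, so wideness applied to that set of rows bounds its size by $n$; combined with the elementary inequality $|A|\,|B|\ge n(|A|+|B|-n)$ (equivalently $(n-|A|)(n-|B|)\ge0$), the single rectangle $R$ already has enough cells to account for the combined demand $\sum d_i$ of all rows reusing it. Iterating this bookkeeping over the maximal rectangles occurring among the $A_i\times B_i$ — carefully charging each row's demand $d_i$ against a rectangle that contains the bulk of $A_i\times B_i$ — yields the required bound $\bigl|\bigcup_iA_i\times B_i\bigr|\ge\sum_id_i$.

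The main obstacle is this overlap case: one must show that the "slack" $|A|\,|B|-n(|A|+|B|-n)\ge0$ of a shared rectangle $A\times B\subseteq[n]^2$ is enough to pay simultaneously for all rows of length $\ge n$ that reuse it — including short rows whose own small rectangle sits deep inside $[n]^2$ — and that this charging is globally consistent when several large rectangles overlap. This is precisely where "wide" is used; without it, arbitrarily many rows can share the rectangle $[2]\times[2]$ (as for $Y=(2,2,2)$), $(\star)$ fails, and indeed $\tau^{(2)}(H(Y))<|Y|$ there.
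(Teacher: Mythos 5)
Your reduction is correct, and in fact it is equivalent to the paper's starting point: by K\"onig's theorem, $2a_i-\max\{|A_i|+|B_i|\}$, the maximum taken over rectangles $A_i\times B_i$ disjoint from the bipartite graph $[a_i]\times[a_i]-Q$, equals $\nu(a_i,Q)$, so your $(\star)$ is a restatement of the inequality $|P|\ge|Q|+\sum_i\nu(a_i,Q)\ge|Y|$ that the paper isolates via \refC{cor:PtoQ}. The trouble is that $(\star)$ is where the entire difficulty of the theorem lives, and your argument for it does not hold up. The inductive step (delete a row owning $d_{i_0}$ private cells) is fine but says nothing in the hard case, and the ``key point'' you invoke there is false: wideness does not bound by $n$ the number of rows whose rectangle contains a fixed $R=A\times B$ with $A,B\subseteq[n]$. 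For a concrete failure, take $Y=\bigl((2n-1)^{2n-1}\bigr)$, the $(2n-1)\times(2n-1)$ square, which is wide, and set $A_i=B_i=[n]$ for every row; then all $2n-1>n$ rows share $R=[n]\times[n]$ and each has $d_i=1>0$. Your charging scheme bounds each such row's demand by $|A|+|B|-n=n$ and needs at most $n$ participating rows to pay out of $|A|\,|B|=n^2$; here it would have to pay $(2n-1)n>n^2$. (The true total demand is only $2n-1$, because the demands depend on the actual row lengths $a_i$, not on $n$ --- which is precisely why the accounting must track how many rows fall into each length range, and that is where dominance has to be applied with the right truncation parameters.) Also note that for arbitrary $A,B\subseteq[n]$ a row with $A_i\supseteq A$ only satisfies $a_i\ge\max A$, not $a_i\ge n$.

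For comparison, the paper's route through this bottleneck is: first normalize $Q=P\cap(C\times S)$ by a shifting argument so that it is closed down (\refL{lemma:shifted2cover}); then show that the increments of $\ell\mapsto\nu(\ell,Q)$ are always $1$ or $2$ and decompose $(p,p+q]$ into alternating intervals $\cO_j,\cT_j$ accordingly; prove the lower bound $|Q|\ge\sum_j\cII_j\bigl(p+\cI_0+\sum_{\ell<j}(\cII_\ell+\cI_\ell)\bigr)$ (\refL{lem:Q_lower_bound}); and finally play the resulting lower bound on $|P|$ against the dominance inequalities for a nested family of subdiagrams $Z_1,\dots,Z_k$, each truncated at $\cII_j$ rows and columns. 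Your closing paragraph concedes that the ``globally consistent charging'' over overlapping rectangles is the unresolved obstacle; it is, and supplying it is the bulk of the proof, so as it stands the proposal has a genuine gap.
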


	The converse is also true: 
	\begin{theorem}\label{thm:converse}
		If $\tau^{(2)}(H(Y))=|Y|$, then the Young diagram $Y$ is wide.
	\end{theorem}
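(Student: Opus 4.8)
I would prove the contrapositive: assuming $Y$ is not wide, I will exhibit a $2$-cover of $H(Y)$ of size strictly smaller than $|Y|$, which together with \refOb{ob:tau2leY} gives $\tau^{(2)}(H(Y))\neq|Y|$. Since $Y$ is not wide, there is a set $S$ of rows whose spanned subdiagram $Z$ (the Young diagram whose parts are the lengths of the rows in $S$) does not dominate $Z'$. The first step is to reduce to the case $S=R(Y)$: as $a_1(Z)\le a_1(Y)$, we may identify $C(Z),S(Z)$ with initial segments of $C(Y),S(Y)$ and the rows of $Z$ with the rows in $S$; under this identification $H(Z)=\bigcup_{r_i\in S}H_i(Y)$, so any $2$-cover $Q$ of $H(Z)$ with $|Q|<|Z|$ extends, by adjoining the pairs $\{r_ic_j:r_i\notin S,\ 1\le j\le a_i\}$, to a $2$-cover of $H(Y)$ of size $<|Z|+\sum_{r_i\notin S}a_i=|Y|$. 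Hence it suffices to show: if a Young diagram $W$ does not dominate $W'$, then $\tau^{(2)}(H(W))<|W|$.

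So let $W$ have rows $a_1\ge\cdots\ge a_m\ge1$ and columns $b_1\ge\cdots\ge b_{a_1}$, so $|W|=\sum_j b_j$. Failure of domination yields an index $k$ with $\sum_{i\le k}a_i<\sum_{j\le k}b_j$; splitting on whether $m>a_1$ shows such a $k$ can be chosen with $1\le k\le a_1$, and then $a_k\ge1$. Set $p=\min(k,a_k)$ and $q=\max(k,a_k)$, so $1\le p\le q\le a_1$, and take
\[
P=\{r_ic_j : q<j\le a_i\}\ \cup\ \{r_is_l : p<l\le a_i\}\ \cup\ \{c_js_l : 1\le j\le q,\ 1\le l\le p\}.
\]
That $P$ is a $2$-cover is immediate: for an edge $r_ic_js_l$ (so $j,l\le a_i$), if $j>q$ then $r_ic_j\in P$; if $j\le q$ and $l>p$ then $r_is_l\in P$; and if $j\le q$ and $l\le p$ then $c_js_l\in P$. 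Using the identity $\sum_i\max(a_i-t,0)=\sum_{j>t}b_j$, one gets $|P|=\sum_{j>q}b_j+\sum_{j>p}b_j+pq$, and since $p\le q$ the bound $|P|<|W|$ is equivalent to
\[
\sum_{j=1}^{p}b_j-\sum_{j=q+1}^{a_1}b_j>pq=k\,a_k.
\]

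This last inequality is the heart of the argument and the only point where non-domination is used. I would invoke the conjugation identity $\sum_{i\le k}a_i=\sum_j\min(b_j,k)=k\,a_k+\sum_{j>a_k}b_j$, which holds because $b_j\ge k$ exactly when $j\le a_k$. Since $\{p,q\}=\{k,a_k\}$, one checks directly that $\sum_{j\le p}b_j-\sum_{j>q}b_j=\sum_{j\le k}b_j-\sum_{j>a_k}b_j=\sum_{j\le k}b_j-\bigl(\sum_{i\le k}a_i-k\,a_k\bigr)$, which is $>k\,a_k$ precisely because $\sum_{j\le k}b_j>\sum_{i\le k}a_i$. This completes the proof. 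I expect the genuinely non-routine step to be guessing the right thresholds $(p,q)=(\min(k,a_k),\max(k,a_k))$ and the shape of $P$; once those are in hand, the cover check and the computation of $|P|$ are bookkeeping, and the final inequality drops out of the conjugation identity. (As a check: for $W=(3,2,2)$ the violation occurs at $k=2$, forcing $p=q=2$ and $|P|=6<7$; for $W=(3,3,1,1)$ it occurs at $k=1$, giving $(p,q)=(1,3)$ and $|P|=7<8$.)
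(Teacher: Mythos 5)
Your proof is correct. It shares the paper's master plan --- prove the contrapositive, pass to a sub-diagram $W$ on which domination fails at some index $k$, exhibit an explicit $2$-cover of $H(W)$ of size less than $|W|$ consisting of a rectangle inside $C\times S$ together with row pairs, and lift it to the whole diagram by adding the pairs $r_ic_j$ for the remaining squares --- but your execution is genuinely leaner at the two places where the paper works hardest. The paper chooses $k$ minimal and proves \refCl{akk} (that $k\le a_k$) from that minimality, because it builds its cover from $Q=[k]\times[a_k]$ together with per-row minimum covers supplied by K\"onig's theorem (\refL{lemma:TinuaiQ}, \refC{cor:QtoP}) and evaluated by \refL{lemma:nupq}, and it then verifies the size bound through the case analysis over $T_0,\dots,T_3$. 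You avoid the minimality claim entirely by symmetrizing the thresholds, $(p,q)=(\min(k,a_k),\max(k,a_k))$, you write the cover down globally without insisting on row-optimality (for rows with $a_i>p+q$ your cover spends $2a_i-p-q>a_i$ pairs, which is harmless), and you count by columns, so that after the conjugation identity $\sum_{i\le k}a_i=ka_k+\sum_{j>a_k}b_j$ the inequality $|P|<|W|$ becomes literally the domination violation at $k$, with no cases. What the paper's heavier route buys is reuse: \refC{cor:QtoP}, \refL{lemma:nupq} and covers whose $C\times S$ part is a closed-down rectangle are exactly the tools needed again for the main direction in \refS{sec:widetau2=n}, whereas your argument is shorter and self-contained for this theorem alone. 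The steps you leave as one-liners --- that $k$ may be taken with $k\le a_1$ and then $a_k\ge 1$, and the identity $\sum_{j\le p}b_j-\sum_{j>q}b_j=\sum_{j\le k}b_j-\sum_{j>a_k}b_j$ in the case $k>a_k$ --- do check out.
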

	This is a strengthening of \refT{reiner}: if $\nu^{(2)}(H(Y))=|Y|$ then by~\eqref{eq:tauknukrelation} and~\refOb{ob:tau2leY}, $\tau^{(2)}(H(Y))=|Y|$.
	By these theorems, $Y$ is wide if and only if $\tau^{(2)}(H(Y))=|Y|$.

	\refT{thm:converse} is proved in~\refS{sec:converse} and~\refT{thm:widetau2=n} in~\refS{sec:widetau2=n}. 
	In~\refS{sec:conjs} we post some questions strengthening~\refCon{conj:main}.
	
	
	\section{Preliminaries} \label{sec:prerequisites}

	\begin{notation}\label{nota:XtimesY}
		For two disjoint sets $A$ and $B$, we denote $\{ ab \mid a\in A, b\in B   \}$ by $A\times B$. 
	\end{notation}
	This notation is usually reserved for the set of ordered pairs, but the assumption $A \cap B =\emptyset$ removes the risk of ambiguity.

	Given  a tripartite hypergraph  $F$  with sides $A,B,C$, we denote $\{ bc\in B\times C \mid abc\in F \text{ for some $a\in A$}  \}$ by  
	$F[B\times C]$.

	\begin{notation}
		We abbreviate $\{c_1,\dots, c_i\}\times \{s_1,\dots, s_j\}\subseteq C\times S$ by $[i]\times [j]$.
	\end{notation}
	\begin{notation}
		Given an integer $\ell$ and a subset $Q$
		of $C\times S$, let \[\nu(\ell,Q):=\nu([\ell]\times[\ell]-Q)\]
		be the matching number of the graph $(\{c_1,\dots, c_\ell\}\times \{s_1,\dots, s_\ell\})\setminus Q$.
	\end{notation}

	\begin{lemma}\label{lemma:TinuaiQ}
		For any $Q\subseteq C\times S$ and $1\le i\le m$, the minimum number of pairs in $\{r_i\}\times (C\cup S)$ needed to cover all the edges of $H_i$ (defined in~\eqref{eq:def:Hi}) that are not covered by $Q$ is $\nu(a_i,Q)$.
	\end{lemma}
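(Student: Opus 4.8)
The plan is to recognize the quantity in question as a minimum vertex cover in a bipartite graph and then invoke König's theorem.

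First I would pin down which edges of $H_i$ survive $Q$. Because $Q\subseteq C\times S$, every element of $Q$ is a pair of the form $c_js_k$, and such a pair is a subset of exactly one edge of $H_i$, namely $r_ic_js_k$. Hence the edges of $H_i$ not covered by $Q$ are precisely those $r_ic_js_k$ with $1\le j,k\le a_i$ and $c_js_k\notin Q$; they correspond bijectively to the edges of the bipartite graph $G:=[a_i]\times[a_i]-Q$ with parts $\{c_1,\dots,c_{a_i}\}$ and $\{s_1,\dots,s_{a_i}\}$.

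Next I would translate the covering task. A pair $r_ic_j$ covers exactly the surviving edges running through column $c_j$, and a pair $r_is_k$ covers exactly those running through symbol $s_k$; pairs $r_ic_j$ or $r_is_k$ with index larger than $a_i$ cover no edge of $H_i$ and may be dropped. So a family $P\subseteq\{r_i\}\times(C\cup S)$ covers all surviving edges of $H_i$ if and only if $\{c_j : r_ic_j\in P\}\cup\{s_k : r_is_k\in P\}$ is a vertex cover of $G$, and the minimum size of such a $P$ equals the vertex cover number $\tau(G)$. By König's theorem $\tau(G)=\nu(G)=\nu([a_i]\times[a_i]-Q)=\nu(a_i,Q)$, which is the claim.

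I do not anticipate a genuine obstacle: the lemma is essentially a restatement of bipartite König duality once the dictionary between edges and pairs of $H_i$ and edges and vertices of $G$ is set up. The only points needing a word of justification are that no member of $Q$ can cover an edge of $H_i$ except through its column--symbol sub-pair (immediate from $Q\subseteq C\times S$) and that covering pairs with indices exceeding $a_i$ are useless.
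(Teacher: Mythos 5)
Your proof is correct and follows essentially the same route as the paper's: both reduce the problem to a minimum vertex cover of the bipartite graph $[a_i]\times[a_i]-Q$ and invoke K\"onig's theorem. Your write-up is if anything slightly more explicit about why members of $Q$ cover exactly the edges through their column--symbol sub-pair and why pairs with index exceeding $a_i$ are useless.
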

	\begin{proof}
		Let $T_i$ be a minimum collection of pairs in $\{r_i\}\times (C\cup S)$ that cover all the edges of $H_i$ that are not covered by $Q$. Then $\cup_{t\in T_i} t\cap(C\cup S)$ is a cover of the bipartite graph $[a_i]\times [a_i]-Q$, otherwise suppose $c_js_k$ is not covered by $\cup_{t\in T_i} t\cap(C\cup S)$, then $r_ic_js_k\in H_i$ is not covered by $T_i$ or $Q$, a contradiction. Hence 
		\[ |T_i|\ge \nu(a_i,Q),    \]
		by K\"onig's theorem.
		
		On the other hand, let $Z_i$ be a minimum cover of the bipartite graph $[a_i]\times [a_i]-Q$, it is routine to check that $\{r_i\}\times Z_i$ forms a 2-cover that covers all the edges of $H_i$ that are not covered by $Q$.
		Therefore \[|T_i|\le \nu(a_i,Q).  \]
		This proves that $\nu(a_i,Q)$ is the desired covering number.
	\end{proof}

	\begin{corollary}\label{cor:QtoP}
		For any $Q\subseteq C\times S$, there exists a 2-cover $P$ of $H(Y)$ satisfying 
		\[ \tau^{(2)}(H)\le  |P|=|Q|+\sum_{i=1}^m\nu(a_i,Q)   \]
		and $Q=P\cap (C\times S)$.
	\end{corollary}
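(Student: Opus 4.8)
The plan is to construct $P$ directly from $Q$ by appending, for each row, the covering pieces supplied by \refL{lemma:TinuaiQ}. Concretely, for each $1\le i\le m$ let $T_i\subseteq \{r_i\}\times(C\cup S)$ be a \emph{minimum} collection of pairs covering all edges of $H_i$ that are not covered by $Q$; by \refL{lemma:TinuaiQ} such a $T_i$ exists and $|T_i|=\nu(a_i,Q)$. Then set $P:=Q\cup\bigcup_{i=1}^m T_i$.

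First I would check that $P$ is a $2$-cover of $H(Y)$. Since $H(Y)=\bigcup_{i=1}^m H_i$, it suffices to cover every edge of each $H_i$; but such an edge is covered by $Q$, or else by $T_i$ by the choice of $T_i$, and both $Q$ and $T_i$ lie in $P$. Hence $\tau^{(2)}(H(Y))\le |P|$.

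Next I would compute $|P|$ and verify $P\cap(C\times S)=Q$ at the same time, by noting that the union defining $P$ is disjoint. Indeed $Q\subseteq C\times S$ consists of pairs with no coordinate in $R$, whereas every pair in $T_i$ contains $r_i\in R$; and for $i\ne i'$ the pairs in $T_i$ and $T_{i'}$ are distinct since they contain the distinct row vertices $r_i\ne r_{i'}$. Thus $Q,T_1,\dots,T_m$ are pairwise disjoint, so $|P|=|Q|+\sum_{i=1}^m|T_i|=|Q|+\sum_{i=1}^m\nu(a_i,Q)$, and intersecting $P$ with $C\times S$ removes every $T_i$ and leaves precisely $Q$.

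I do not anticipate a genuine obstacle: the statement is essentially a repackaging of \refL{lemma:TinuaiQ}, and the only point needing a moment's care is the disjointness bookkeeping, which is what upgrades the size bound from $\le$ to the claimed equality $|P|=|Q|+\sum_{i=1}^m\nu(a_i,Q)$ and simultaneously yields $Q=P\cap(C\times S)$.
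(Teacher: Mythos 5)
Your construction $P=Q\cup\bigcup_{i=1}^m T_i$ with the minimum row-covers $T_i$ from \refL{lemma:TinuaiQ} is exactly the paper's proof, which the paper states in one line and you have merely spelled out (the covering check and the disjointness bookkeeping giving $|P|=|Q|+\sum_i\nu(a_i,Q)$ and $Q=P\cap(C\times S)$). The argument is correct and takes essentially the same approach as the paper.
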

	\begin{proof}
		Let $P=Q\cup\cup_{i=1}^m T_i$, where $T_i$ is as in the proof of~\refL{lemma:TinuaiQ}.
	\end{proof}

	\begin{observation}\label{ob:notcover}
		A pair in $\{r_i\}\times (C\cup S)$ does not cover any edge in $H_j$ for $i\neq j$.
	\end{observation}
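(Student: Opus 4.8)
The plan is to observe that this statement is essentially immediate from the disjointness of the three sides $R$, $C$, $S$ of the tripartite hypergraph $H(Y)$, combined with the conventions already set up: by \refNota{nota:XtimesY} the expression $\{r_i\}\times(C\cup S)$ denotes the set of \emph{pairs} $\{r_i,x\}$ with $x\in C\cup S$, and (as in the definition of a $2$-cover) such a pair $K$ ``covers'' an edge $e$ precisely when $K\subseteq e$.

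First I would fix $i\neq j$, an arbitrary pair $K\in\{r_i\}\times(C\cup S)$, so that $K=\{r_i,x\}$ for some $x\in C\cup S$, and an arbitrary edge $e\in H_j$. By the definition~\eqref{eq:def:Hi} of $H_j$, the edge has the form $e=r_jc_ks_\ell$ for some $c_k\in C$ and $s_\ell\in S$. If $K$ covered $e$, then $K\subseteq e$, and in particular $r_i\in\{r_j,c_k,s_\ell\}$. But $r_i\neq r_j$ since $i\neq j$ and the rows $r_1,\dots,r_m$ are distinct, and $r_i\notin C\cup S$ since the three sides $R$, $C$, $S$ are pairwise disjoint by construction. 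This contradiction shows that $K$ does not cover $e$; as $K$ and $e$ were arbitrary, the claim follows.

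There is no genuine obstacle here; the only point requiring a little care is to keep separate the two roles a pair from $\{r_i\}\times(C\cup S)$ can play — as a member of a $2$-cover it must sit \emph{inside} an edge, and an edge of $H_j$ simply never contains the vertex $r_i$. I would also note why recording this trivial fact is worthwhile: it is the statement that, in analyzing a $2$-cover $P$ of $H(Y)$, the pairs of $P$ meeting $R$ partition cleanly according to the row they involve, so that the per-row cover sizes counted in \refL{lemma:TinuaiQ} and \refC{cor:QtoP} do not interfere with one another — which is exactly what lets one reduce covering $H(Y)$ to the separate bipartite problems $[a_i]\times[a_i]-Q$.
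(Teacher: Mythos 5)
Your argument is correct and is exactly the intended (one-line) justification: an edge of $H_j$ contains only $r_j$ from the side $R$, so a pair containing $r_i$ with $i\neq j$ cannot be a subset of it. The paper states this as an observation without proof, and your proposal supplies precisely that routine verification, so there is nothing further to compare.
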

	\begin{notation}
		For a set of pairs $P \subseteq (R\times C) \cup (R\times S) \cup (C \times S)$ let $Q(P)=P\cap(C\times S)$.
	\end{notation}
	
	\refL{lemma:TinuaiQ} and~\refOb{ob:notcover} imply the following.
	\begin{corollary}\label{cor:PtoQ}
		Let $P$ be a  2-cover  of $H(Y)$ and let $Q=Q(P)$. Then  \[|P|\ge |Q|+\sum_{i=1}^m\nu(a_i,Q).\]
	\end{corollary}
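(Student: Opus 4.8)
The plan is to take an arbitrary 2-cover $P$ of $H(Y)$ and split it according to which pair of sides among $R,C,S$ its members join. Write $Q=Q(P)=P\cap(C\times S)$ and, for $1\le i\le m$, set $P_i=P\cap(\{r_i\}\times(C\cup S))$. Under the standing convention that $P\subseteq (R\times C)\cup(R\times S)\cup(C\times S)$, every pair of $P$ belongs either to $Q$ or to exactly one $P_i$ (the pair $r_ix$ with $x\in C\cup S$ lies only in $P_i$, and pairs in $C\times S$ lie in $Q$), so these parts are pairwise disjoint and cover $P$, giving $|P|=|Q|+\sum_{i=1}^m|P_i|$.

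First I would show that, for each fixed $i$, the set $P_i$ by itself covers every edge of $H_i$ that is not covered by $Q$. Indeed, by \refOb{ob:notcover} no pair of the form $r_jx$ with $j\ne i$ is contained in any edge of $H_i$, so the only members of $P$ that can cover an edge of $H_i$ are those in $Q\cup P_i$. Since $P$ covers $H(Y)\supseteq H_i$, the set $P_i$ must cover all edges of $H_i$ missed by $Q$. Then \refL{lemma:TinuaiQ} applies verbatim: the minimum number of pairs in $\{r_i\}\times(C\cup S)$ needed for exactly this task is $\nu(a_i,Q)$, hence $|P_i|\ge\nu(a_i,Q)$.

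Summing over $i$ and substituting into $|P|=|Q|+\sum_{i=1}^m|P_i|$ gives $|P|\ge|Q|+\sum_{i=1}^m\nu(a_i,Q)$, as desired. There is no real obstacle here: the entire combinatorial content already sits in \refL{lemma:TinuaiQ}, and the corollary is just the observation that an arbitrary 2-cover restricts, side by side, to a feasible solution of each of the per-row covering problems analysed there. The one point to keep honest is the disjointness of the decomposition $P=Q\cup\bigcup_{i=1}^m P_i$, so that cardinalities add without overcounting; and if one prefers not to assume the standing restriction on $P$, one first discards from $P$ any $2$-set not contained in an edge of $H(Y)$, which changes neither $Q(P)$ nor the validity of $P$ as a cover and only decreases $|P|$.
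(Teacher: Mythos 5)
Your proposal is correct and is exactly the argument the paper intends when it states that Lemma~\ref{lemma:TinuaiQ} and Observation~\ref{ob:notcover} imply the corollary: decompose $P$ into $Q$ and the row-parts $P_i$, use the observation to see each $P_i$ must cover the edges of $H_i$ missed by $Q$, and apply the lemma to bound $|P_i|\ge\nu(a_i,Q)$. No differences worth noting.
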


	\begin{lemma}\label{lemma:nupq}
		For $0\le p\le q$, we have 
		\begin{equation*}
			\nu(\ell, [p]\times [q])=\begin{cases}
				0, & \text{if $\ell\le p$}.\\
				\ell-p, & \text{if $p<\ell\le q$}.\\
				2\ell-p-q, & \text{if $q<\ell\le p+q$}.\\
				\ell & \text{if $\ell>p+q$}.
			\end{cases}
		\end{equation*}
	\end{lemma}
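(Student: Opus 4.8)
The plan is to read $\nu(\ell,[p]\times[q])$ literally as the matching number of the bipartite graph $G:=[\ell]\times[\ell]-[p]\times[q]$ with colour classes $\{c_1,\dots,c_\ell\}$ and $\{s_1,\dots,s_\ell\}$, and, in each of the four ranges of $\ell$, to squeeze its value between an explicit matching and an explicit vertex cover of equal size; since $\nu(G)\le\tau(G)$ always holds, producing a matching and a cover of the same size pins down $\nu(G)$ with no further appeal to K\"onig's theorem. The one structural fact to record first is that in $G$ a vertex $c_i$ with $i>p$ is adjacent to every $s_j$, whereas a vertex $c_i$ with $i\le p$ is adjacent precisely to the $s_j$ with $q<j\le\ell$ (so such a $c_i$ is isolated once $q\ge\ell$). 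Note also that only the part $[\min(p,\ell)]\times[\min(q,\ell)]$ of the forbidden rectangle actually lies in $[\ell]\times[\ell]$, which is what makes the first two ranges degenerate.

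The four ranges are then routine. If $\ell\le p$, then $G$ has no edges, so $\nu=0$. If $p<\ell\le q$, then $c_1,\dots,c_p$ are isolated, so $\{c_{p+1},\dots,c_\ell\}$ is a cover of size $\ell-p$ while $\{\,c_{p+i}s_i:1\le i\le\ell-p\,\}$ is a matching of size $\ell-p$; hence $\nu=\ell-p$. If $q<\ell\le p+q$, then $\{c_{p+1},\dots,c_\ell\}\cup\{s_{q+1},\dots,s_\ell\}$ is a cover of size $(\ell-p)+(\ell-q)=2\ell-p-q$ (an edge $c_is_j$ of $G$ with $i\le p$ has $j>q$, so is met by $s_j$; one with $i>p$ is met by $c_i$), and the two ``staircases'' $\{\,c_is_{q+i}:1\le i\le\ell-q\,\}$ and $\{\,c_{p+i}s_i:1\le i\le\ell-p\,\}$ together form a matching of size $2\ell-p-q$: the first is legal because its $s$-endpoints have index $>q$, the second because its $c$-endpoints have index $>p$, and the hypothesis $\ell\le p+q$ is exactly what makes the two staircases vertex-disjoint ($\ell-q\le p$ on the $c$-side, $\ell-p\le q$ on the $s$-side). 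Hence $\nu=2\ell-p-q$. Finally, if $\ell>p+q$, the cyclic bijection $\sigma$ given by $\sigma(i)=i+q$ for $i\le\ell-q$ and $\sigma(i)=i+q-\ell$ for $i>\ell-q$ uses no forbidden pair, since the only $i$ with $\sigma(i)\le q$ satisfy $i\ge\ell-q+1>p$; thus $\{\,c_is_{\sigma(i)}:1\le i\le\ell\,\}$ is a perfect matching and $\nu=\ell$. (A glance at $\ell=p$, $\ell=q$, $\ell=p+q$ confirms the four formulas agree there, so the piecewise definition is consistent.)

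I do not expect a real obstacle; the only thing demanding care is the bookkeeping at the case boundaries, in particular checking that the two staircases in the third range share no vertex --- this is precisely where $\ell\le p+q$ is used --- and disposing of the degenerate ranges $\ell\le p$ and $\ell\le q$ before invoking the ``neighbourhood of $c_i$ equals $\{s_{q+1},\dots,s_\ell\}$'' description. If a single uniform computation is preferred over the four explicit constructions, one can instead use the defect form of K\"onig's theorem, $\nu(G)=\ell-\max_{A}\bigl(|A|-|N_G(A)|\bigr)$ over $A\subseteq\{c_1,\dots,c_\ell\}$: adjoining any $c_i$ with $i>p$ to $A$ forces $N_G(A)=\{s_1,\dots,s_\ell\}$, so the maximum is attained at $A=\emptyset$ or $A=\{c_1,\dots,c_{\min(p,\ell)}\}$, giving $\max\bigl\{0,\ \min(p,\ell)-\max(0,\ell-q)\bigr\}$, which unwinds exactly to the four stated values.
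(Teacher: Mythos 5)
Your proposal is correct and follows essentially the same route as the paper: in each range it exhibits an explicit matching and an explicit vertex cover of equal size (indeed the same two ``staircases'' and the same cover in the range $q<\ell\le p+q$), with only cosmetic differences in the other ranges (a diagonal versus an anti-diagonal matching when $p<\ell\le q$, and a cyclic-shift perfect matching versus extending the $\ell=p+q$ matching when $\ell>p+q$).
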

	\begin{proof}
		For $\ell\le p$, $[\ell]\times [\ell]- [p]\times [q]=\emptyset$ so that $\nu(\ell, [p]\times [q])=0$.
		
		For $p<\ell\le q$, in $[\ell]\times [\ell]- [p]\times [q]$ we can find a matching $\{c_{p+1}s_{p+1},\dots,c_\ell s_\ell \}$ of size $\ell-p$ and a vertex-cover $\{c_{p+1},\dots, c_\ell\}$ of the same size, implying $\nu(\ell, [p]\times [q])=\ell-p$.

		For $q<\ell\le p+q$, in $[\ell]\times [\ell]- [p]\times [q]$ we can find a matching 
		$\{c_{p+1}s_1,\dots,c_\ell s_{\ell-p}, s_{q+1}c_1,\dots, s_\ell c_{\ell-q} \}$
		of size $(\ell-p)+(\ell-q)$ and a vertex-cover $\{c_{p+1},\dots,c_\ell,s_{q+1},\dots, s_{\ell}\}$ of the same size, therefore $\nu(\ell, [p]\times [q])=2\ell-p-q$.
		
		In particular,  $\nu(p+q, [p]\times [q])=p+q$, which means that $[p+q]\times [p+q]-[p]\times[q]$ has a perfect matching. Then for $\ell>p+q$, this matching together with $\{c_{\ell'}s_{\ell'}\mid p+q<\ell'\le \ell   \}$ forms a  perfect matching in $[\ell]\times[\ell]-[p]\times[q]$. Therefore 
		$\nu(\ell, [p]\times [q])=\ell$ for $\ell>p+q$.
	\end{proof}

	\section{Proof of  \refT{thm:converse}}\label{sec:converse}
	
	We prove~\refT{thm:converse} in its contrapositive form. 
	\begin{theorem}\label{thm:inverse}
		If a Young diagram $Z$ is not wide, then $\tau^{(2)}(H(Z))<|Z|$. 
	\end{theorem}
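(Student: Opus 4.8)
The plan is to read off, directly from a failure of wideness, a set $Q\subseteq C\times S$ for which the $2$-cover supplied by \refC{cor:QtoP} has fewer than $|Z|$ pairs; equivalently, I will exhibit $Q$ with $|Q|+\sum_{i=1}^m\nu(a_i,Q)<|Z|$. Every $Q$ will be a rectangle $[p]\times[q]$ with $p\le q$, so that \refL{lemma:nupq} applies; inspecting its four cases one gets the convenient identity $a_i-\nu(a_i,[p]\times[q])=\min(a_i,p)+\min(a_i,q)-\min(a_i,p+q)\ge 0$, which is the engine of the argument.

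\textbf{Step 1 (localize to the non-dominating subdiagram).} If $Z$ is not wide, fix a set $S$ of its rows whose subdiagram $W$ does not dominate $W'$; let $w_1\ge\cdots\ge w_t$ be the parts of $W$, let $w'_1\ge w'_2\ge\cdots$ be the parts of $W'$, and fix $k\ge 1$ with $\sum_{i\le k}w_i<\sum_{i\le k}w'_i$. Since every summand $a_i-\nu(a_i,Q)$ is nonnegative, the rows outside $S$ can only help:
\[
|Z|-\sum_{i=1}^m\nu(a_i,Q)\ \ge\ \sum_{i\in S}\bigl(a_i-\nu(a_i,Q)\bigr)\ =\ \Bigl(\sum_{j\le p}w'_j\Bigr)+\Bigl(\sum_{j\le q}w'_j\Bigr)-\Bigl(\sum_{j\le p+q}w'_j\Bigr),
\]
where I used $\sum_{i\in S}\min(a_i,\ell)=\sum_{j\le\ell}w'_j$ (the number of cells of $W$ in its first $\ell$ columns). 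Thus it suffices to pick $p\le q$ for which the right-hand side exceeds $|Q|=pq$, i.e.
\[
\sum_{j\le p}w'_j-\sum_{j=q+1}^{p+q}w'_j\ >\ pq ,
\]
an inequality living entirely inside $W$.

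\textbf{Step 2 (pick the rectangle).} Put $d:=w_{k+1}$, which is exactly $\#\{j:w'_j>k\}$, and set $p:=\min(k,d)$ and $q:=\max(k,d)$. Rewriting $\sum_{i\le k}w_i=\sum_j\min(w'_j,k)=kd+\sum_{j>d}w'_j$, the failure of domination becomes
\[
\sum_{j\le k}w'_j\ >\ kd+\sum_{j>d}w'_j .
\]
Cancelling the common terms $\sum_{d<j\le k}w'_j$ in the case $d\le k$ (there is nothing to cancel when $d>k$), this says precisely $\sum_{j\le p}w'_j>pq+\sum_{j>q}w'_j$, and since $\sum_{j=q+1}^{p+q}w'_j\le\sum_{j>q}w'_j$ we obtain the inequality required in Step 1. (One also reads off $d\ge 1$ and $k\le w_1-1$ from the displayed inequality, so $[p]\times[q]$ is a legitimate subset of $C\times S$.) Hence $|Q|+\sum_i\nu(a_i,Q)<|Z|$, and \refC{cor:QtoP} gives $\tau^{(2)}(H(Z))<|Z|$.

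The one real obstacle is guessing the rectangle: once Step 1 has reduced matters to the inequality $\sum_{j\le p}w'_j-\sum_{j=q+1}^{p+q}w'_j>pq$, it is not a priori clear that $p=\min(k,w_{k+1})$ and $q=\max(k,w_{k+1})$ are the right dimensions, and checking legitimacy of $[p]\times[q]$ together with the small case split $d\le k$ versus $d>k$ is exactly where the hypothesis gets used. The reduction in Step 1 also deserves an explicit sentence, since it relies on the (easy but essential) fact that $\nu(a_i,Q)\le a_i$, so that enlarging $W$ back up to $Z$ never hurts.
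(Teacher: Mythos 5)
Your route is genuinely different from the paper's and is essentially viable: the paper takes the \emph{minimal} witness $k$, proves $k\le a_k$ (\refCl{akk}), covers the offending subdiagram $Y$ with the rectangle $[k]\times[a_k]$ via \refC{cor:QtoP} and \refL{lemma:nupq}, and then extends that cover to $H(Z)$ by adding the pairs $r_ic_j$ for the cells of $Z\setminus Y$; you instead work with $Z$ all at once, using the identity $a_i-\nu(a_i,[p]\times[q])=\min(a_i,p)+\min(a_i,q)-\min(a_i,p+q)\ge 0$ to discard the rows outside the witnessing subdiagram, and you sidestep both the minimality of $k$ and \refCl{akk} by choosing the rectangle $[\min(k,w_{k+1})]\times[\max(k,w_{k+1})]$. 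Your Step 1 computation and the two-case verification of $\sum_{j\le p}w'_j>pq+\sum_{j>q}w'_j$ (for $d\le k$ and $d>k$) are correct.

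There is, however, one concrete flaw: the parenthetical claim that the displayed inequality forces $k\le w_1-1$ is false, and with it the assertion that $[p]\times[q]\subseteq C\times S$, which is exactly the hypothesis needed to invoke \refC{cor:QtoP}. Already $W=(1,1)$ with $k=1$ violates $k\le w_1-1$ (harmlessly), and the containment itself can genuinely fail: take $Z=W=(2,1,1,1,1)$ with the witness $k=3$ (indeed $w_1+w_2+w_3=4<6=w_1'+w_2'+w_3'$); then $d=w_4=1$, so $p=1$, $q=3$, while $|S|=a_1(Z)=2$, so $c_1s_3$ is not a pair of vertices of $H(Z)$ and your $Q$ is not a subset of $C\times S$. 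Fortunately the repair is a one-liner: replace $Q$ by $Q\cap(C\times S)=[p]\times[\min(q,a_1)]$ (legitimate since $p\le w_{k+1}\le a_1$); because every $a_i\le a_1$ and $\nu(\ell,\cdot)$ depends only on the restriction of $Q$ to $[\ell]\times[\ell]$, every value $\nu(a_i,\cdot)$ is unchanged while $|Q|$ can only decrease, so the inequality $|Q|+\sum_i\nu(a_i,Q)<|Z|$ persists and \refC{cor:QtoP} then applies. (The alternative repair, taking $k$ minimal, would reintroduce the need for an argument like \refCl{akk}, which your construction was designed to avoid.) With this fix your proof is correct; also note your reuse of the letter $S$ for the chosen set of rows clashes with the symbol side $S$ of $H(Y)$ and should be renamed.
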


	\begin{proof}[Proof of Theorem \ref{thm:inverse}]
		By the assumption that $Z$ is not wide, there exists a subpartition $Y$ formed by a set of rows of~$Z$, such that $Y$ does not dominate $Y'$. Let $k$ be the minimum number that witnesses this fact, namely 
		
		\begin{equation}\label{eq:not:balanced:k}
			\sum_{i =1}^k a_i(Y) < \sum_{i =1}^k b_i(Y).
		\end{equation}
		
		Write $a_i=a_i(Y)$ and $b_j=b_j(Y)$.

		\begin{claim} \label{akk}
			
			$k\le a_k.$
			
		\end{claim}
		\begin{proof}
			Assume the contrary and let $\ell = a_k<k$. We have
			\begin{equation}\label{eq:sumaibj}
				\sum_{i=1}^\ell  a_i= \sum_{i=1}^k  a_i- \sum_{\ell<i\le k}  a_i \quad \text{ and }\quad \sum_{j=1}^\ell  b_j= \sum_{j=1}^k  b_j- \sum_{\ell<j\le k}  b_j.  
			\end{equation}
			If we  show 
			\begin{equation}\label{eq:alarger}
				\sum_{\ell<i\le k}  a_i \ge   \sum_{\ell<i\le k}   b_j,
			\end{equation}
			then combining~\eqref{eq:sumaibj},~\eqref{eq:alarger}, and the assumption~\eqref{eq:not:balanced:k}, we have
			\[  \sum_{i=1}^\ell a_i < \sum_{j=1}^\ell b_j,  \]
			contradicting the minimality of~$k$.

			To prove \eqref{eq:alarger}, on the one hand,
			\begin{equation}\label{eq:aisum}
				\begin{split}
					\sum_{\ell<i\le k}  a_i
					\ge & \Bigl|\Big\{ (i,j)\in Y: \ell<i\le k, 1\le j\le \ell  \Big\}\Bigr|+ \underbrace{\Bigl|\Big\{ (i,j)\in Y: \ell<i\le k, \ell < j \le k  \Big\}\Bigr|}_{=:L_{\ell,k}} \\
					= & (k-\ell)\ell + L_{\ell,k},
				\end{split}
			\end{equation}
			where the equality is because $a_i\ge a_k=\ell$ for each $1\le i\le k$.
			
			On the other hand,
			\begin{equation}\label{eq:bjsum}
				\begin{split}
					\sum_{\ell<j\le k}  b_j 
					\le & \Bigl|\Big\{ (i,j)\in Y: 1\le i\le \ell, \ell<j\le k  \Big\}\Bigr|+\Bigl|\Big\{ (i,j)\in Y: \ell<i\le k, \ell < j \le k  \Big\}\Bigr|\\
					\le & (k-\ell)\ell + L_{\ell,k},
				\end{split}
			\end{equation}
			where the first inequality is because $a_k=\ell$, then for each $j>\ell$, $b_j=|\{i:a_i\ge j\}|\le k$ and then the squares in the $\ell+1,\dots,k$th columns of~$Y$ are contained in $\{(i,j)\in Y \mid 1\le i\le k, \ell<j\le k\}$.
			Combining~\eqref{eq:aisum} and~\eqref{eq:bjsum} yields~\eqref{eq:alarger}, and thus the claim. 
		\end{proof}

		\begin{claim}\label{y} 
			$ \ctau(H(Y))<\sum_{i=1}^ma_i=|Y|$. 
		\end{claim}
		\begin{proof}
			
			Let $Q=[k]\times [a_k]$, $T_0=\{i: a_i\le k, i\ge k+1\}$, $T_1=\{i: k<a_i\le a_k, i\ge k+1 \}$, $T_2=\{i: a_k\le a_i\le k+a_k, i\le k\}$, and $T_3=\{i: a_i> k+a_k\}$.

			By~\refC{cor:QtoP} and \refL{lemma:nupq} (that can be applied here in view of Claim \ref{lemma:nupq},)
			
			\begin{align}
				\ctau(H(Y))&\le |Q|+\sum_i\nu(a_i,Q)\nonumber\\
				&=ka_k+\sum_{i\in T_1}(a_i-k)+\sum_{i\in T_2}(2a_i-k-a_k)+\sum_{i\in T_3}a_i,\label{eq:Q+nu_i1}
			\end{align}
			and our aim is to show~\eqref{eq:Q+nu_i1} is less than $|Y|=\sum_ia_i$, 
			i.e.,
			\[  ka_k+\sum_{i\in T_1}(a_i-k)+\sum_{i\in T_2}(2a_i-k-a_k)+\sum_{i\in T_3}a_i<\sum_{i\in T_0}a_i+\sum_{i\in T_1}a_i+\sum_{i\in T_2}a_i+\sum_{i\in T_3}a_i.      \]
			This is equivalent to showing
			\begin{equation}\label{eq:Q+nu_i1equi}
				ka_k+\sum_{i\in T_2}(a_i-a_k)+\sum_{i\in T_3}(a_i-a_k)<\sum_{i\in T_0}a_i+k|T_1|+k|T_2|+\sum_{i\in T_3}(a_i-a_k).
			\end{equation}
			Noting that $|T_2\cup T_3|=k$, the left-hand side of~\eqref{eq:Q+nu_i1equi} is $\sum_{i\in T_2}a_i+\sum_{i\in T_3}a_i$, i.e., the sum of the lengths of the first $k$ rows of $Y$.
			Since $a_i-a_k\ge k$ for $i\in T_3$, the right-hand side is at least $\sum_{i\in T_0}a_i+k|T_1|+k|T_2|+k|T_3|$, which is the sum of the lengths of the first $k$ columns of $Y$. By our assumption~\eqref{eq:not:balanced:k},
			\[ \sum_{i\in T_2}a_i+\sum_{i\in T_3}a_i< \sum_{i\in T_0}a_i+k|T_1|+k|T_2|+k|T_3|.\]
			Therefore ,~\eqref {eq:Q+nu_i1equi} holds, completing the proof 
			of \refCl{y}. 
		\end{proof}    
		
		Now, let $P$ be the minimum $2$-cover of $H(Y)$. Then $|P|<|Y|$. Let $W = \{r_ic_j \mid (i,j) \in Z \setminus Y\}$. Then $P \cup W$ is a $2$-cover of $H(Z)$, of size $|P|+|Z|-|Y|$, which is less than $|Z|$. We prove the theorem.  
	\end{proof}

	\section{Wideness of $Y$ implies $\ctau(H(Y))=|Y|$}\label{sec:widetau2=n}
	
	In this section we prove our main result,  \refT{thm:widetau2=n}. Given a $2$-cover $P$ of $H(Y)$ for a wide diagram $Y$, we want to show that $|P|\ge |Y|$. As a first step, we replace $P$ by a $2$-cover having the same size, but more structure.  
	We ``shift" it, which in our context means that we ``push left'' the edges in $P\cap(C \times S)$ (the explicit definition is given below).
	\subsection{Shifting}
	Shifting is defined for an arbitrary set $P\subseteq (C\times S)\cup (R\times S)\cup (R\times C)$ with respect to either the $C$ or $S$ side of $H(Y)$ --- we choose the first, namely $C$. 
	Let $Q=Q(P)=P\cap(C\times S)$ and $W=W(P)=P\cap(C\times R)$.
	
	For a graph~$G$ and a vertex~$v$ of~$G$, let $N_G(v)$ be the set of neighbors of~$v$ in~$G$.
	
	For  an integer $t$
	let $L_t=\{r_\ell\in R: a_\ell <t \}$.

	Let $c_i,c_j$ be two elements of $C$ with $i<j$. The \emph{shifting} $P'=\phi_{c_i,c_j}(P)$ of $P$ is  obtained from $P$ by replacing all edges of $P$ incident with $\{c_i,c_j\}$ by 
	
	\begin{align*}
		&\{c_i\}\times \Big(N_{Q}(c_i)\cup N_{Q}(c_j) \Big)\bigcup \{c_j\}\times \Big(N_{Q}(c_j)\cap N_{Q}(c_i)\Big)\\
		\bigcup& \{c_i\}\times \Big(N_{W}(c_i)\cap \big(N_{W}(c_j)\cup L_j\big)\Big)\bigcup \{c_j\}\times \Big(N_{W}(c_j)\cup \big(N_{W}(c_i)\setminus L_j\big)\Big)
	\end{align*}

	\begin{lemma}\label{lemma:shiftingc}
		Let $i<j$ and let $P'=\phi_{c_i,c_j}(P)$. Then 
		\begin{enumerate}
			
			\item $|P'|=|P|$, and 
			\item If $P$ is a 2-cover of $H(Y)$, so is $P'$.
		\end{enumerate}

	\end{lemma}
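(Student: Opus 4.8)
The plan is to establish the two assertions separately: (1) is a short counting identity, and (2) is a case analysis over the edges of $H(Y)$. For (1), the point I would first isolate is that $\phi_{c_i,c_j}$ alters $P$ only among the pairs meeting $\{c_i,c_j\}$, which split into the $Q$-pairs (those in $C\times S$) and the $W$-pairs (those in $C\times R$): every pair in $P\cap(R\times S)$ and every pair avoiding $c_i$ and $c_j$ is retained, and the newly inserted pairs are distinct from one another and from the retained ones, since each carries $c_i$ or $c_j$ in its $C$-coordinate. Thus $|P'|=|P|$ reduces to two balances. On the $Q$-side the number removed is $|N_Q(c_i)|+|N_Q(c_j)|$ and the number added is $|N_Q(c_i)\cup N_Q(c_j)|+|N_Q(c_i)\cap N_Q(c_j)|$, and these agree by inclusion--exclusion. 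On the $W$-side, writing $A=N_W(c_i)$ and $B=N_W(c_j)$, the number removed is $|A|+|B|$ and the number added is $|A\cap(B\cup L_j)|+|B\cup(A\setminus L_j)|$; expressing both summands through the single quantity $|(A\setminus L_j)\setminus B|$ turns this into $\big(|A|-|(A\setminus L_j)\setminus B|\big)+\big(|B|+|(A\setminus L_j)\setminus B|\big)=|A|+|B|$.

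For (2) I would fix an arbitrary edge $e=r_\ell c_p s_q\in H(Y)$ and a pair $f\in P$ with $f\subseteq e$, and exhibit $f'\in P'$ with $f'\subseteq e$. If $p\notin\{i,j\}$ then $f\in\{r_\ell c_p,\ r_\ell s_q,\ c_p s_q\}$ is incident with neither $c_i$ nor $c_j$, so $f\in P'$; likewise $f=r_\ell s_q$ always survives. The substance is the case $p\in\{i,j\}$ with $f$ a $Q$- or $W$-pair, which I would handle by reading off from the definition of $\phi_{c_i,c_j}$ exactly which such pairs survive verbatim, and by showing that the (at most one) remaining type of $f$ can be replaced by a pair of $P'$ drawn from $P$'s coverage of a \emph{neighboring} edge. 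Concretely: if $p=i$ and the only covering pair is $f=r_\ell c_i$ with $r_\ell\notin N_W(c_j)\cup L_j$, then $r_\ell\notin L_j$ gives $a_\ell\ge j$, so $r_\ell c_j s_q\in H(Y)$; $P$ covers it not through $r_\ell c_j$ (as $r_\ell\notin N_W(c_j)$), hence through $r_\ell s_q\in P'$ or through $c_j s_q$, and in the latter case $c_i s_q\in\{c_i\}\times(N_Q(c_i)\cup N_Q(c_j))\subseteq P'$. Symmetrically, if $p=j$ and the only covering pair is $f=c_j s_q$ with $s_q\notin N_Q(c_i)$, then $a_\ell\ge j>i$, so $r_\ell c_i s_q\in H(Y)$; $P$ covers it not through $c_i s_q$, hence through $r_\ell s_q\in P'$ or through $r_\ell c_i$, and in the latter case $r_\ell\in N_W(c_i)$ while $a_\ell\ge j$ forces $r_\ell\notin L_j$, so $c_j r_\ell\in\{c_j\}\times(N_W(c_j)\cup(N_W(c_i)\setminus L_j))\subseteq P'$. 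The remaining subcases ($f=c_i s_q$ with $p=i$; $f=r_\ell c_i$ with $r_\ell\in N_W(c_j)\cup L_j$; $f=r_\ell c_j$ with $p=j$; $f=c_j s_q$ with $s_q\in N_Q(c_i)$) all follow at once from the formula defining $P'$.

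The main obstacle I expect is bookkeeping rather than a new idea: $\phi_{c_i,c_j}$ is asymmetric in $c_i$ and $c_j$ and is entangled with the ``short rows'' set $L_j$, so one must be careful about which pairs are sent where. The only genuinely substantive step is the substitution in part (2), whose engine is just that a row reaching column $c_j$ cannot lie in $L_j$, combined with the hypothesis that $P$ already covers the neighboring edge. I would be especially careful, at each use of a neighboring edge, to verify the side conditions $i\le a_\ell$, $j\le a_\ell$, $q\le a_\ell$ guaranteeing that this edge actually belongs to $H(Y)$ --- these are precisely the conditions that make $L_j$ play its intended role.
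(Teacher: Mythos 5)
Your proof is correct and takes essentially the same route as the paper's: part (1) is the same counting balance on the $Q$-pairs and $W$-pairs incident with $c_i,c_j$ (your single-quantity bookkeeping via $|(A\setminus L_j)\setminus B|$ is just a repackaging of the paper's inclusion--exclusion on $T_1,T_2$), and part (2) is the same case analysis, passing to the neighboring edge with $c_i$ and $c_j$ swapped and using that a row reaching column $c_j$ cannot lie in $L_j$. The side conditions you flag ($i<j\le a_\ell$, $q\le a_\ell$) are exactly the ones the paper checks, so there is no gap.
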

	\begin{proof}
		The first part follows from 
		\[   |N_Q(c_i)\cup N_Q(c_j)|+ |N_Q(c_i)\cap N_Q(c_j)|=|N_Q(c_i)|+|N_Q(c_j)|,       \]
		and for $T_1=N_{W}(c_i)\cap(N_{W}(c_j)\cup L_j)$ and $T_2=N_{W}(c_j)\cup (N_{W}(c_i)\setminus L_j)$,
		\begin{align*}
			|T_1|+|T_2|=&|T_1\cup T_2|+|T_1\cap T_2|  \\
			=&|N_{W}(c_i)\cup N_{W}(c_j)|+|N_{W}(c_i)\cap N_{W}(c_j)|\\
			=&|N_{W}(c_i)| + |N_{W}(c_j)|.
		\end{align*}

		For the second part, since the pairs in~$P$ that are not incident to $c_i$ or $c_j$ are kept in~$P'$, we only need to show that edges of the form $e=r_\ell c_i s_k$ or $e=r_\ell c_j s_k$  
		are still covered by $P'$.

		{\bf Case I}:
		$e=r_\ell c_i s_k$.

		If $r_\ell s_k$ or $c_is_k$ is in $P$, then they are also in $P'$ and then~$e$ is covered by~$P'$.  So, we may assume that $r_\ell c_i \in P$. 
		If $r_\ell c_i \not \in  P'$, then by the definition of $P'$,  $r_\ell \not\in N_{W}(c_j)\cup L_j$.
		The fact that 
		$r_\ell \not\in L_j$ means that $j\le a_\ell$. Hence  $r_\ell c_js_k \in H(Y)$ ($k\le a_\ell$ follows from $e\in H(Y)$). 
		The fact that $r_\ell\not\in N_{W}(c_j)$ means that $r_\ell c_j s_k$ is covered by $P$ via $r_\ell s_k$ or $c_j s_k$. If $r_\ell s_k\in P$, then it is kept in $P'$; If $c_js_k \in P$, then by definition $c_i s_k \in P'$. In both cases, $e=r_\ell c_i s_k$ is covered by~$P'$ and we are done.
		
		{\bf Case II}:
		$e=r_\ell c_j s_k$.

		If $e$ is covered by $r_\ell s_k$ or by  $r_\ell c_j$ in $P$, then since these pairs are kept in $P'$, $e$ is covered in $P'$. Thus we may assume that $c_j s_k\in P$. On the other hand, the edge $r_\ell c_i  s_k$ of $H(Y)$ is covered by $P$.  If $r_\ell s_k \in P$, then $r_\ell s_k\in P'$ and $e$ is covered by~$P'$.  If $c_i s_k\in   P$, then $s_k\in N_Q(c_i)\cap N_Q(c_j)$, therefore by definition $c_j s_k \in P'$ and $e$ is covered by $P'$. If $r_\ell c_i \in P$, since $e\in H(Y)$, then $r_\ell\not\in L_j$ ($r_\ell c_j s_k\in H(Y)$ means that $a_\ell\ge j$). Therefore $r_\ell\in N_{W}(c_i)\setminus L_j$ so that by definition $r_\ell c_j \in P'$ and $e$ is covered by $P'$.
	\end{proof}

	By interchanging the roles of $C$ and $S$, we can also define the shifting operation $\phi_{s_i,s_j}(P)$ for $s_i,s_j\in S$ with~$i<j$, which satisfies  similar property as~\refL{lemma:shiftingc}.
	
	\begin{definition}\label{def:shifting-stable}
		A set $\Gamma\subseteq C\times S$ is called {\em closed down} (in $C \times S$)  if $c_ps_q \in \Gamma$ implies $[p]\times [q]\subseteq \Gamma$. 
	\end{definition}
	\begin{lemma}\label{lemma:Qshift}
		For every set $\Gamma\subseteq C \times S$ there exists a closed down $\Gamma'$  obtained from $\Gamma$ by a finite sequence of shifting operations. 
	\end{lemma}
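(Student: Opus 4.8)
The plan is to run a potential-function (weight) argument showing that repeated shifting terminates at a closed-down set. First I would attach to each pair $c_p s_q \in C\times S$ a weight, say $w(c_p,s_q) = 2^{-p} + 2^{-q}$, or more simply $w(c_p, s_q) = -(p+q)$, chosen so that ``pushing an element left/down'' (replacing a pair $c_j s_q$ by $c_i s_q$ with $i<j$) strictly increases the total weight $W(\Gamma) = \sum_{c_p s_q \in \Gamma} w(c_p,s_q)$. The key point is that a single shift $\phi_{c_i,c_j}$ restricted to the $C\times S$ part replaces $N_Q(c_i), N_Q(c_j)$ by $N_Q(c_i)\cup N_Q(c_j)$ (attached to $c_i$) and $N_Q(c_i)\cap N_Q(c_j)$ (attached to $c_j$); since $i<j$, every symbol moved from $c_j$'s neighborhood to $c_i$'s has its weight strictly increased, while the multiset of symbols attached to the pair $\{c_i,c_j\}$ on the $S$-side is preserved (this is exactly the cardinality identity $|A\cup B| + |A\cap B| = |A| + |B|$ used in \refL{lemma:shiftingc}). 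Hence $W$ strictly increases under any $\phi_{c_i,c_j}$ with $i<j$ that actually changes $\Gamma$, and symmetrically under $\phi_{s_i,s_j}$. Because $\Gamma \subseteq C\times S$ is finite, $W$ is bounded above, so only finitely many nontrivial shifts can be applied.

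Second, I would show that if no nontrivial shift applies then $\Gamma$ is closed down. Suppose $c_p s_q \in \Gamma$ but $[p]\times[q]\not\subseteq \Gamma$; pick a violating pair $c_{p'} s_{q'}$ with $p'\le p$, $q'\le q$ and $c_{p'}s_{q'}\notin\Gamma$. If $q' = q$ and $p' < p$, then in $\phi_{c_{p'},c_p}(\Gamma)$ the symbol $s_q$ would be attached to $c_{p'}$ (since $s_q \in N_\Gamma(c_p) \subseteq N_\Gamma(c_{p'})\cup N_\Gamma(c_p)$), so the shift is nontrivial — contradiction. Similarly if $p' = p$ and $q' < q$, the shift $\phi_{s_{q'},s_q}$ is nontrivial. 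The general case $p' < p$, $q' < q$ reduces to these by first shifting in $C$ to bring $c_{p'}s_q$ into $\Gamma$ (valid since $c_p s_q \in \Gamma$) and then shifting in $S$, or one simply observes that a set stable under all $\phi_{c_i,c_j}$ and $\phi_{s_i,s_j}$ is automatically closed under taking the ``staircase closure'': from $c_p s_q\in\Gamma$ one gets $c_{p'}s_q \in \Gamma$ for all $p'\le p$ by $C$-shifts, and then $c_{p'}s_{q'}\in\Gamma$ for all $q'\le q$ by $S$-shifts, so $[p]\times[q]\subseteq\Gamma$.

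Third, I would assemble the two observations: starting from $\Gamma$, apply nontrivial shifts one at a time; by the weight bound this process halts after finitely many steps at some $\Gamma'$; by the second observation $\Gamma'$ is closed down; and $\Gamma'$ is obtained from $\Gamma$ by a finite sequence of shifting operations, which is exactly the statement. One should note the mild subtlety that the shift operations $\phi_{c_i,c_j}$ and $\phi_{s_i,s_j}$ were defined in the excerpt as operations on sets $P$ in the full pair-space $(C\times S)\cup(R\times S)\cup(R\times C)$, whereas the lemma concerns $\Gamma\subseteq C\times S$ alone; this is handled by simply taking $P = \Gamma$ (so $W(P) = \emptyset$) and recording that the $C\times S$-part of $\phi_{c_i,c_j}(P)$ depends only on the $C\times S$-part of $P$, so the operation is well-defined directly on subsets of $C\times S$.

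\textbf{Main obstacle.} The routine part is the weight bookkeeping; the only real thing to get right is the monotonicity claim — namely that any shift which changes $\Gamma$ strictly increases $W$, equivalently that a shift which fixes $W$ must fix $\Gamma$. This forces the weight to be chosen so that it is \emph{strictly} monotone in the left/down direction and constant on the swapped multiset; a weight like $w(c_p,s_q) = -(p+q)$ or any strictly decreasing-in-$p$, decreasing-in-$q$ function works, but one must check carefully that when $N_\Gamma(c_i) = N_\Gamma(c_j)$ (so the shift is trivial on $C\times S$) the weight is indeed unchanged, and conversely. Beyond that, confirming that ``stable under all $\phi$'' genuinely implies ``closed down'' (the staircase-closure step) is the other point needing a clean argument, but it is elementary.
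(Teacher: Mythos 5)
Your proposal is correct and follows essentially the same route as the paper: a strictly monotone potential argument for termination (your weight $-(p+q)$ summed over $\Gamma$ is just the negative of the paper's potential $\sum_i i\bigl(|N_{\Gamma'}(c_i)|+|N_{\Gamma'}(s_i)|\bigr)$), followed by the same staircase argument that a shift-stable set is closed down. No gaps.
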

	\begin{proof}
		We first show that after a finite sequence of shifting operations, we reach at a set that is stable under shifting. And then we prove that a stable set is close down.
		\begin{claim}
			After finitely many shifting operations on $\Gamma$, we can reach $\Gamma'\subseteq [a_1]\times [a_1]$ such that $\phi_{c_i,c_j}(\Gamma')=\phi_{s_i,s_j}(\Gamma')=\Gamma'$ for all $1\le i<j\le a_1$.
		\end{claim}
		\begin{proof}[Proof of the claim]
			Set $f(\Gamma')=\sum_{i=1}^{a_1}i\Big(|N_{\Gamma'}(c_i)|+|N_{\Gamma'}(s_i)|\Big)$. If $\phi_{c_i,c_j}$ or $\phi_{s_i,s_j}$ shifts $\Gamma'$ to $\Gamma''$ that is different from $\Gamma'$, then $f(\Gamma'')<f(\Gamma')$. But $f$ is non-negative, which means after finitely many shifting operations, the resulting set becomes stable, which completes the proof of the claim.
		\end{proof}
		
		\begin{claim}
			If $\phi_{c_i,c_j}(\Gamma')=\phi_{s_i,s_j}(\Gamma')=\Gamma'$ for all $1\le i<j\le a_1$, then $\Gamma'$ is closed down.
		\end{claim}
		\begin{proof}[Proof of the claim]
			Assume $\Gamma'$ satisfies the shifting stable property. If $c_ps_q \in \Gamma'$, then for all $p'\le p, ~q' \le q$, we have $c_{p'}s_{q'}\in \Gamma'$: If $p'<p$,  the shifting-stability applied to $c_p, c_{p'}$ yields $c_{p'}s_q \in \Gamma'$. If $q'< q$, then another application of the shifting-stability of $s_q,s_{q'}$ yields $c_{p'}s_{q'} \in \Gamma'$.
		\end{proof}
		Combining the claims, after finitely many shifting operations on $\Gamma$, we get a closed down~$\Gamma'$.
	\end{proof}

	Combining~\refL{lemma:shiftingc} and~\refL{lemma:Qshift}, we have the following:
	
	\begin{lemma}\label{lemma:shifted2cover}
		For any 2-cover $P$ of $H(Y)$, there exists a 2-cover $P'$ of $H(Y)$ such that $|P'|=|P|$ and $P'\cap (C\times S)$ is closed down.
	\end{lemma}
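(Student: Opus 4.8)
The plan is to obtain $P'$ by applying to $P$ exactly the sequence of shifting operations that \refL{lemma:Qshift} produces for the projection $Q(P)$. Concretely, set $\Gamma=Q(P)=P\cap(C\times S)$ and apply \refL{lemma:Qshift} to get shifting operations $\psi_1,\dots,\psi_N$, each of the form $\phi_{c_i,c_j}$ or $\phi_{s_i,s_j}$ with $i<j$, such that $\Gamma':=\psi_N(\cdots\psi_1(\Gamma)\cdots)\subseteq C\times S$ is closed down. Then define $P':=\psi_N(\cdots\psi_1(P)\cdots)$, where each $\psi_t$ is now read as a shifting operation on the full set $P\subseteq (C\times S)\cup(R\times S)\cup(R\times C)$.

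There are two things to verify. The first --- that $P'$ is a $2$-cover of $H(Y)$ with $|P'|=|P|$ --- follows by iterating \refL{lemma:shiftingc} (for the steps of type $\phi_{c_i,c_j}$) and its $S$-side analogue (for the steps of type $\phi_{s_i,s_j}$): each individual step preserves cardinality and the property of being a $2$-cover, so by induction on the number of steps these properties pass to the composition; here one uses that after each step the result is still a $2$-cover, which is exactly the hypothesis needed to apply the next step.

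The second, and the only point that genuinely needs attention, is that $Q(P')=\Gamma'$ --- that shifting the whole $2$-cover and then projecting to $C\times S$ agrees with projecting first and then running the sequence from \refL{lemma:Qshift}. Care is required here because \refL{lemma:Qshift} is stated only for sets contained in $C\times S$. It suffices to check the claim for a single step. Inspecting the definition of $\phi_{c_i,c_j}(P)$, the edges of the output lying in $C\times S$ are precisely $\{c_i\}\times(N_Q(c_i)\cup N_Q(c_j))$ together with $\{c_j\}\times(N_Q(c_i)\cap N_Q(c_j))$, since the two ``$W$'' terms of the definition contribute only edges in $R\times C$; these depend on $P$ solely through $Q=Q(P)$. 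On the other hand, when $\phi_{c_i,c_j}$ is applied to the subset $Q\subseteq C\times S$ on its own, one has $N_W(c_i)=N_W(c_j)=\emptyset$, so its output equals this same set. Hence $Q(\phi_{c_i,c_j}(P))=\phi_{c_i,c_j}(Q(P))$, and symmetrically $Q(\phi_{s_i,s_j}(P))=\phi_{s_i,s_j}(Q(P))$. Iterating over the sequence, $Q(P')=\psi_N(\cdots\psi_1(\Gamma)\cdots)=\Gamma'$, which is closed down. This yields the desired $P'$. The only real obstacle is thus this compatibility-of-projection bookkeeping; once it is in place the lemma follows formally from \refL{lemma:shiftingc} and \refL{lemma:Qshift}, with no further estimates.
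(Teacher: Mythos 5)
Your proposal is correct and is essentially the paper's own argument: the paper proves \refL{lemma:shifted2cover} simply by combining \refL{lemma:shiftingc} with \refL{lemma:Qshift}, exactly as you do. Your explicit check that the $C\times S$ part of a shifted $2$-cover depends only on $Q(P)$, so that projection commutes with the shifting sequence, is a worthwhile filling-in of a detail the paper leaves implicit (just note that the $C\times S$ edges of $\phi_{c_i,c_j}(P)$ also include the untouched edges of $Q$ not incident with $c_i,c_j$, which agree on both sides as well).
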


	\subsection{More facts about $|Q|$ and $\nu(\ell, Q)$}
	\refL{lemma:nupq} provides the values of $\nu(\ell,Q)$  when $Q=[p]\times [q]$. In this section, we study $\nu(\ell, Q)$ and $|Q|$ for general closed down $Q$. Together with~\refC{cor:PtoQ}, this will be the key in the proof of~\refT{thm:widetau2=n}.
	
	Throughout this section $Q\subseteq C \times S$
	and is assumed to be closed down. Let $p\ge 0$ be the maximum integer such that $[p]\times [p]\subseteq Q$. This is well-defined as $|Q|$ is finite.
	
	\begin{observation}\label{obs:jump_by_1_or_2}\hfill 
		\begin{enumerate}
			\item [(1)] $\nu(\ell,Q)=0$ for $\ell \le p$. 
			\item [(2)] $\nu(\ell,Q)-\nu(\ell-1,Q)\in\{1,2\}$ for every $\ell > p$.
		\end{enumerate}
		
	\end{observation}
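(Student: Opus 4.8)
The plan is to read off both parts directly from the structure of a closed-down set $Q$ and the behaviour of maximum matchings in the graphs $G_\ell := [\ell]\times[\ell] - Q$. For part (1), if $\ell \le p$ then by definition of $p$ we have $[p]\times[p]\subseteq Q$, hence $[\ell]\times[\ell]\subseteq [p]\times[p]\subseteq Q$, so $G_\ell$ has no edges and $\nu(\ell,Q)=0$. This is immediate and requires no work.

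For part (2), fix $\ell > p$. The lower bound $\nu(\ell,Q)\ge \nu(\ell-1,Q)$ is clear since $G_{\ell-1}$ is a subgraph of $G_\ell$. For the upper bound $\nu(\ell,Q)\le \nu(\ell-1,Q)+2$: $G_\ell$ is obtained from $G_{\ell-1}$ by adding the two vertices $c_\ell, s_\ell$ (together with whatever edges of $[\ell]\times[\ell]-Q$ touch them), and adding two vertices to a graph can increase the matching number by at most $2$. So the only content is the \emph{strict} lower bound $\nu(\ell,Q)\ge \nu(\ell-1,Q)+1$, i.e. that the jump is never $0$. Here I would use that $Q$ is closed down together with $\ell > p$. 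Because $[p]\times[p]\subseteq Q$ but $[p+1]\times[p+1]\not\subseteq Q$ (maximality of $p$), and $Q$ is closed down, one checks that $c_{p+1}s_{p+1}\notin Q$; more generally I claim that for every $\ell>p$ at least one of the edges incident to the new vertex $c_\ell$ or $s_\ell$ in $G_\ell$ can be used to extend a maximum matching of $G_{\ell-1}$. Concretely: take a maximum matching $M$ of $G_{\ell-1}$; it saturates at most $\nu(\ell-1,Q)$ columns among $c_1,\dots,c_{\ell-1}$, so if $\nu(\ell-1,Q)\le \ell-2$ there is an unsaturated $c_i$ with $i\le \ell-1$, and by closed-downness the edge $c_i s_\ell$ lies in $G_\ell$ iff $c_i s_\ell\notin Q$, which I would argue holds for a suitable choice of $i$ (the ``rightmost'' unsaturated column); adding $c_i s_\ell$ gives a larger matching. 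The remaining case $\nu(\ell-1,Q)=\ell-1$, i.e. $G_{\ell-1}$ has a perfect matching on $\{c_1,\dots,c_{\ell-1}\}$, then forces (again using closed-downness and $\ell>p$) that $c_\ell s_\ell\notin Q$, so $M\cup\{c_\ell s_\ell\}$ works and in fact $\nu(\ell,Q)=\ell$.

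The main obstacle I anticipate is pinning down \emph{which} non-$Q$ edge to add in the first case: I need to be sure that among the columns/rows not saturated by a chosen maximum matching there is one whose pairing with the new vertex avoids $Q$. The clean way is to exploit that $Q$ closed down means $N_Q(s_\ell)$ is a down-set $\{c_1,\dots,c_{a}\}$ for some $a$, and $N_Q(c_\ell)$ is a down-set $\{s_1,\dots,s_b\}$; since $\ell>p$, at least one of $a<\ell$ or $b<\ell$ must hold (otherwise $[\ell]\times\{s_\ell\}$ and $\{c_\ell\}\times[\ell]$ would both lie in $Q$, and combined with a closed-down argument this would push $p$ up). Then a short König/augmenting-path argument on $G_{\ell-1}$, comparing the König cover to the down-set of forbidden edges, produces the required unsaturated vertex on the ``free'' side. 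Once that lemma is isolated the rest is bookkeeping.
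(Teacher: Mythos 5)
Your part (1) and the upper bound in part (2) (adding the two vertices $c_\ell,s_\ell$ cannot raise the matching number by more than $2$) are exactly the paper's argument. The gap is in the only nontrivial step, the strict lower bound $\nu(\ell,Q)\ge\nu(\ell-1,Q)+1$. Your concrete plan --- extend a maximum matching of $[\ell-1]\times[\ell-1]-Q$ by an edge from $s_\ell$ (or $c_\ell$) to an unsaturated old vertex, reserving $c_\ell s_\ell$ for the case where that matching is perfect --- does not go through as described. Closed-downness and the maximality of $p$ do force $N_Q(s_\ell)\subseteq\{c_1,\dots,c_p\}$ and $N_Q(c_\ell)\subseteq\{s_1,\dots,s_p\}$, but nothing prevents \emph{all} unsaturated old vertices from lying inside these forbidden prefixes even when the matching is far from perfect. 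For example, take $Q=\bigl(\{c_1,c_2\}\times\{s_1,\dots,s_N\}\bigr)\cup\bigl(\{c_1,\dots,c_N\}\times\{s_1,s_2\}\bigr)$ with $N$ large; this is closed down with $p=2$, and for $p<\ell\le N$ every maximum matching of $[\ell-1]\times[\ell-1]-Q$ has size $\ell-3$ and leaves exactly $c_1,c_2,s_1,s_2$ unsaturated, all of which are $Q$-neighbours of $s_\ell$ resp.\ $c_\ell$. So neither your ``rightmost unsaturated column'' choice nor its row analogue exists, and you are not in your perfect-matching case either; the promised K\"onig/augmenting-path repair is only sketched, so the key step remains unproved.

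The fix is a one-line fact you essentially had but used only for $\ell=p+1$ and inside the perfect-matching case: for \emph{every} $\ell>p$, if $c_\ell s_\ell\in Q$ then closed-downness gives $[p+1]\times[p+1]\subseteq[\ell]\times[\ell]\subseteq Q$, contradicting the maximality of $p$; hence $c_\ell s_\ell\notin Q$ unconditionally. Since any maximum matching of $[\ell-1]\times[\ell-1]-Q$ avoids both $c_\ell$ and $s_\ell$, adding the edge $c_\ell s_\ell$ enlarges it by one, giving $\nu(\ell,Q)\ge\nu(\ell-1,Q)+1$ with no case analysis (in the example above this is indeed the only way to extend). This is precisely the paper's proof, and your whole case distinction collapses to it.
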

	\begin{proof}
		The first part is obvious by the definition of~$p$. For the second part, for $\ell>p$, by maximality of~$p$ and the closed down property of~$Q$, we have $c_\ell s_\ell\not\in Q$.
		Then $\nu(\ell,Q)-\nu(\ell-1,Q) \ge 1$ is because for every matching in $[\ell-1] \times [\ell-1]$ avoiding $Q$, we can add the edge $c_{\ell}s_{\ell}$ to obtain a matching in   $[\ell] \times [\ell]$ avoiding $Q$. $\nu(\ell,Q)-\nu(\ell-1,Q) \le 2$ is because that adding two vertices to a bipartite graph cannot increase $\tau=\nu$ by more than~$2$. 
	\end{proof}

	Let $q\ge 0$ be minimal for which $\nu(\ell, Q)=\ell$ for all $\ell \ge p+q$. This is well-defined since it is easy to check for all $\ell\ge 2|Q|$, $\nu(\ell, Q)=\ell$. (See~\cite[Example 1.6 (4)]{looms} for a stronger bound.) For $p\ge 1$, for any $p\le \ell<p+p$, by~\refOb{obs:jump_by_1_or_2} we have $\nu(\ell, Q)\le 2(\ell-p)<\ell$. Hence $q\ge p$.

	By the second part of Observation~\ref{obs:jump_by_1_or_2}, we can divide the interval $(p,p+q]$ into (left-open and right-closed with integer endpoints) sub-intervals\footnote {Here   $O$ stands for ``One" and $T$  for ``Two". Similarly for the Roman letters $\cI$ and $\cII$ below.} $\cO_0,\cT_1,\cO_1,\cT_2,\dots,\cO_{k-1},\cT_k$ in order such that

	\begin{equation*}
		\nu(\ell,Q)-\nu(\ell-1,Q)=
		\begin{cases}
			1, & \text{if $\ell\in \cO_j$ for some $0\le j\le k-1$}.\\
			2, & \text{if $\ell\in \cT_j$ for some $1\le j\le k$}.
		\end{cases}
	\end{equation*}
	Note that all these sub-intervals, except possibly $\cO_0$, are non-empty. Furthermore, by the minimality of $q$, the last sub-interval must be $\cT_k$ (rather than some $\cO_j$).
	
	Let $\cI_j$ be the length of $\cO_j$ for $0\le j\le k-1$ and $\cII_j$ be the length of $\cT_j$ for $1\le j\le k$. 
	
	We can express $\nu(\ell, Q)$ explicitly in terms of  $\mathcal{Q}:=
	\{p,q,\cO_0,\cT_1,\dots,\cO_{k-1},\cT_k,\cI_0,\cII_1,\dots,\cI_{k-1},\cII_k\}$ of $Q$, as follows.

	\begin{lemma}
		\label{lemma:fcQexplicit}
		For any closed down $Q\subseteq C\times S$ with 
		$\mathcal{Q}$ defined as above, we have
		\begin{align}
			\cO_j&=(p+\sum_{\ell=0}^{j-1}\cI_\ell+\sum_{t=1}^j\cII_t, \; p+\sum_{\ell=0}^{j}\cI_\ell+\sum_{t=1}^j\cII_t],\label{eq:defOj}\\
			\cT_j&=(p+\sum_{\ell=0}^{j-1}\cI_\ell+\sum_{t=1}^{j-1}\cII_t,  \; p+\sum_{\ell=0}^{j-1}\cI_\ell+\sum_{t=1}^j\cII_t].\label{eq:defTj}
		\end{align}
		And   
		\begin{equation}\label{eq:explicitnucQ}
			\nu(\ell,Q)=\begin{cases}
				0, & \text{if $\ell\le p$}.\\
				\ell-p+\sum_{1\le t\le  j}\cII_t, & \text{if $\ell\in \cO_j$ for some $0\le j\le k-1$}.\\
				2\ell-2p-\sum_{0\le t<j}\cI_t, & \text{if $\ell\in \cT_j$ for some $1\le j\le k$}.\\
				\ell ,  &\text{if $\ell>p+q$}.
			\end{cases}
		\end{equation}
	\end{lemma}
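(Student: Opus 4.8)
The plan is to prove Lemma~\ref{lemma:fcQexplicit} by pure bookkeeping, drawing only on the structural facts already in place: by \refOb{obs:jump_by_1_or_2} the function $\ell\mapsto\nu(\ell,Q)$ is $0$ on $[0,p]$ and increases by exactly $1$ or $2$ on each step of $(p,p+q]$; by the choice of $q$ it equals $\ell$ for $\ell>p+q$; and $(p,p+q]$ is partitioned, in increasing order, into the left-open right-closed run-intervals $\cO_0,\cT_1,\cO_1,\cT_2,\dots,\cO_{k-1},\cT_k$, where the increment is $1$ throughout each $\cO_j$ and $2$ throughout each $\cT_j$, and $\cI_j=|\cO_j|$, $\cII_j=|\cT_j|$.

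First I would establish the endpoint formulas \eqref{eq:defOj}--\eqref{eq:defTj}. Since these intervals are consecutive and partition $(p,p+q]$ starting just above $p$, the right endpoint of any one of them is $p$ plus the total length of that interval together with all intervals preceding it, and its left endpoint is the right endpoint of the preceding interval (or $p$, for $\cO_0$). Reading off the alternating order, the intervals weakly preceding $\cO_j$ are $\cO_0,\cT_1,\dots,\cO_{j-1},\cT_j,\cO_j$, of total length $\sum_{t=0}^{j}\cI_t+\sum_{t=1}^{j}\cII_t$, while those strictly preceding $\cO_j$ have total length $\sum_{t=0}^{j-1}\cI_t+\sum_{t=1}^{j}\cII_t$; this gives \eqref{eq:defOj}. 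The analogous count for $\cT_j$, whose weakly preceding intervals are $\cO_0,\cT_1,\dots,\cO_{j-1},\cT_j$ and whose strictly preceding ones are $\cO_0,\cT_1,\dots,\cT_{j-1},\cO_{j-1}$, gives \eqref{eq:defTj}. The only point needing care is that $\cO_0$ may be empty ($\cI_0=0$), but the displayed formulas remain correct in that case.

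Next I would compute $\nu(\ell,Q)$ by telescoping. For $\ell\le p$ the value is $0$ by \refOb{obs:jump_by_1_or_2}(1), and for $\ell>p+q$ it is $\ell$ by the definition of $q$. For $p<\ell\le p+q$, I would write $\nu(\ell,Q)=\sum_{m=p+1}^{\ell}\big(\nu(m,Q)-\nu(m-1,Q)\big)$ using $\nu(p,Q)=0$, noting that each summand is $1$ if $m$ lies in an $\cO$-interval and $2$ if $m$ lies in a $\cT$-interval. If $\ell\in\cO_j$, then $\{p+1,\dots,\ell\}$ covers $\cO_0,\cT_1,\dots,\cO_{j-1},\cT_j$ entirely and an initial segment of $\cO_j$, so exactly $\sum_{t=1}^{j}\cII_t$ of these indices lie in $\cT$-intervals and the remaining $\ell-p-\sum_{t=1}^{j}\cII_t$ contribute $1$ each, yielding $\nu(\ell,Q)=\big(\ell-p-\sum_{t=1}^{j}\cII_t\big)+2\sum_{t=1}^{j}\cII_t=\ell-p+\sum_{t=1}^{j}\cII_t$. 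If $\ell\in\cT_j$, then $\{p+1,\dots,\ell\}$ covers $\cO_0,\cT_1,\dots,\cT_{j-1},\cO_{j-1}$ entirely and an initial segment of $\cT_j$, so exactly $\sum_{t=0}^{j-1}\cI_t$ of these indices lie in $\cO$-intervals, giving $\nu(\ell,Q)=\sum_{t=0}^{j-1}\cI_t+2\big(\ell-p-\sum_{t=0}^{j-1}\cI_t\big)=2\ell-2p-\sum_{0\le t<j}\cI_t$. These match \eqref{eq:explicitnucQ}.

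There is essentially no hard step: the argument is entirely combinatorial bookkeeping. The only things to be attentive to are the alternating order of the run-intervals (which intervals lie strictly before $\cO_j$ versus strictly before $\cT_j$) and the edge cases $q=0$ (equivalently $Q=\emptyset$, where there are no run-intervals and $k=0$) and $\cI_0=0$. It is also worth recording as a byproduct that, since $(p,p+q]$ has length $q$ while the total increment of $\nu$ across it is $p+q$, one has $\sum_{t=1}^{k}\cII_t=p$ and hence $\sum_{t=0}^{k-1}\cI_t=q-p$; this in particular makes the $\cT_k$ case of \eqref{eq:explicitnucQ} consistent with the value $\ell$ at $\ell=p+q$, and should be handy in the sequel.
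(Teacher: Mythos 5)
Your proof is correct and follows essentially the same route as the paper: both arguments are pure bookkeeping over the run-length decomposition of $(p,p+q]$, reading off the interval endpoints from the cumulative lengths and obtaining $\nu(\ell,Q)$ by summing the increments of $1$ and $2$ (the paper anchors at the left endpoint of the containing interval rather than telescoping all the way from $p$, a trivial difference). Your closing remark that $\sum_{t=1}^{k}\cII_t=p$ and $\sum_{t=0}^{k-1}\cI_t=q-p$ is exactly the paper's Corollary~\ref{cor:IIkQ}, proved the same way.
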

	\begin{proof}
		The endpoints in \eqref{eq:defOj} and~\eqref{eq:defTj} follow from the definition of the intervals $O_j$ and $T_j$. The first and last items in \eqref{eq:explicitnucQ} follow from the definitions of $p$ and $q$.
		
		To compute $\nu(\ell,Q)$, when $\ell\in \cO_j$, let $\ell_0$ be the left endpoint of $\cO_j$, which is $p+\sum_{\ell=0}^{j-1}\cI_\ell+\sum_{t=1}^j\cII_t$ by~\eqref{eq:defOj}. And $\nu(\ell_0,Q)=\sum_{\ell=0}^{j-1}\cI_\ell+2\sum_{t=1}^j\cII_t$.
		Therefore
		\begin{align*}
			\nu(\ell,Q)&=(\ell-\ell_0)+\nu(\ell_0,Q)\\
			&=\Big( \ell- (p+\sum_{\ell=0}^{j-1}\cI_\ell+\sum_{t=1}^j\cII_t) \Big) + \sum_{\ell=0}^{j-1}\cI_\ell+2\sum_{t=1}^j\cII_t
			=   \ell-p+\sum_{1\le t\le  j}\cII_t.
		\end{align*}
		
		When $\ell\in \cT_j$, let $\ell_0=p+\sum_{\ell=0}^{j-1}\cI_\ell+\sum_{t=1}^{j-1}\cII_t$ be the left endpoint of $\cT_j$ by~\eqref{eq:defTj}. And $\nu(\ell_0,Q)=\sum_{\ell=0}^{j-1}\cI_\ell+2\sum_{t=1}^{j-1}\cII_t$.
		Therefore
		\begin{align*}
			\nu(\ell,Q)=2(\ell-\ell_0)+\nu(\ell_0,Q)= 2\ell-2p-\sum_{0\le t<j}\cI_t.
		\end{align*}
		Hence we complete the proof.
	\end{proof}

	We have the following results about the sums of $\cII_j$ and of $\cI_j$, and a lower bound on the size of~$Q$ with respect to $\cI_\ell,\cII_j$.
	\begin{corollary}\label{cor:IIkQ}
		For any closed down $Q\subseteq C\times S$ with $\mathcal{Q}$ defined as above,
		we have
		\begin{equation}\label{eq:sumII=P}
			\sum_{j=1}^k\cII_j=p,\quad \sum_{j=0}^{k-1}\cI_j=q-p,
		\end{equation}
	\end{corollary}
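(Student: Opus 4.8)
The plan is to derive both identities from two elementary facts: the interval $(p,p+q]$ has length $q$, and the total increase of $\nu(\cdot,Q)$ across that interval equals $p+q$.

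For the length, recall that $\cO_0,\cT_1,\cO_1,\cT_2,\dots,\cO_{k-1},\cT_k$ partition $(p,p+q]$ into consecutive sub-intervals whose lengths are $\cI_0,\cII_1,\cI_1,\dots,\cI_{k-1},\cII_k$; adding these up gives $\sum_{j=0}^{k-1}\cI_j+\sum_{j=1}^{k}\cII_j=q$. For the increase, telescope the increments using \refOb{obs:jump_by_1_or_2}:
\[\nu(p+q,Q)-\nu(p,Q)=\sum_{\ell=p+1}^{p+q}\bigl(\nu(\ell,Q)-\nu(\ell-1,Q)\bigr),\]
where the summand equals $1$ for each of the $\sum_{j=0}^{k-1}\cI_j$ indices $\ell$ lying in some $\cO_j$ and $2$ for each of the $\sum_{j=1}^{k}\cII_j$ indices lying in some $\cT_j$. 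Since $\nu(p,Q)=0$ (as $[p]\times[p]\subseteq Q$) and $\nu(p+q,Q)=p+q$ (by minimality of $q$), this reads $\sum_{j=0}^{k-1}\cI_j+2\sum_{j=1}^{k}\cII_j=p+q$. Subtracting the length identity gives $\sum_{j=1}^{k}\cII_j=p$, and then $\sum_{j=0}^{k-1}\cI_j=q-p$, which is \eqref{eq:sumII=P}.

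An even shorter route is to invoke \refL{lemma:fcQexplicit} directly: since $p+q$ is the right endpoint of $\cT_k$, the third case of \eqref{eq:explicitnucQ} at $\ell=p+q$ gives $p+q=\nu(p+q,Q)=2q-\sum_{t=0}^{k-1}\cI_t$, whence $\sum_{t=0}^{k-1}\cI_t=q-p$, and the length identity then yields $\sum_{t=1}^{k}\cII_t=p$. I do not anticipate any real obstacle; the only point worth a word is the degenerate case $q=0$, where $\nu(p,Q)=p=0$ forces $p=0$, all index sets are empty, and the identities hold vacuously — so in the arguments above one may assume $q\ge 1$, making $\cT_k$ nonempty and the endpoint/telescoping reasoning valid.
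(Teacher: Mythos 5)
Your argument is correct and is essentially the paper's own proof: both rest on the double count $\nu(p+q,Q)=p+q=\sum_j\cI_j+2\sum_j\cII_j$ combined with the fact that the sub-intervals partition $(p,p+q]$, so their lengths sum to $q$. The remarks on the alternative route via \refL{lemma:fcQexplicit} and the degenerate case $q=0$ are fine but not needed.
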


	\begin{proof}
		Note that by the definition of~$\cO_j,\cT_j,\cI_j,\cII_j$,
		\begin{align*}
			p+\sum_{j=0}^{k-1}\cI_j+\sum_{j=1}^k\cII_j=p+q=\nu(p+q,Q)=\sum_{j=0}^{k-1}\cI_j+2\sum_{j=1}^k\cII_j,
		\end{align*}
		from which the identities in \eqref{eq:sumII=P} immediately follow.
	\end{proof}
	
	\begin{lemma}\label{lem:Q_lower_bound}
		For any closed down $Q\subseteq C\times S$ with $\mathcal{Q}$ defined as above,
		\begin{equation}\label{eq:lbonsizeQ}
			|Q|\ge  \sum_{j=1}^k \cII_j\Big(p+\cI_0+\sum_{\ell=1}^{j-1}(\cII_\ell+\cI_\ell)\Big).  
		\end{equation}
	\end{lemma}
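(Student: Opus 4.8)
The plan is to produce, inside $Q$, an explicit closed down subset of size exactly $\sum_{j=1}^{k}\cII_j P_j$, where $P_j:=p+\cI_0+\sum_{\ell=1}^{j-1}(\cI_\ell+\cII_\ell)$ is the left endpoint of the interval $\cT_j$ (cf.\ \eqref{eq:defTj}) and $d_{j-1}:=p-\sum_{t<j}\cII_t=\sum_{t\ge j}\cII_t$, so that $d_0=p$, $d_k=0$ and $d_{j-1}-d_j=\cII_j$ by \refC{cor:IIkQ}. Since every sub-interval except possibly $\cO_0$ is non-empty, $\cII_j\ge 1$ for all $j$, hence $d_0>d_1>\cdots>d_k=0$; also $P_1<P_2<\cdots<P_k$ and $d_{j-1}\le p\le P_j$. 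Note that $\sum_j\cII_j P_j$ is precisely the right-hand side of \eqref{eq:lbonsizeQ}.

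First I would show that for each $j$ the diagram $Q$ contains the rectangle $[d_{j-1}]\times[P_j]$ or its transpose $[P_j]\times[d_{j-1}]$. If $j=1$ and $\cI_0=0$, both of these are just the Durfee square $[p]\times[p]\subseteq Q$. Otherwise $P_j$ is the largest element of the non-empty interval $\cO_{j-1}$, so by \refL{lemma:fcQexplicit} and the jump pattern $\nu(P_j,Q)=P_j-d_{j-1}$ and $\nu(P_j,Q)-\nu(P_j-1,Q)=1$. By König's theorem the columns and symbols left uncovered by a minimum vertex cover of $[P_j]\times[P_j]\setminus Q$ span a product set contained in $Q$, which, $Q$ being closed down, may be taken to be $[a]\times[b]$ with $a,b\le P_j$ and $a+b=2P_j-\nu(P_j,Q)=P_j+d_{j-1}$. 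If $a$ and $b$ were both $\le P_j-1$, the same rectangle would be available at level $P_j-1$ and would force $\nu(P_j-1,Q)\le 2(P_j-1)-(P_j+d_{j-1})=\nu(P_j-1,Q)-1$, absurd; hence $a=P_j$ (so $b=d_{j-1}$) or $b=P_j$ (so $a=d_{j-1}$), which is the claim.

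Writing $R_j$ for whichever of the two rectangles lies in $Q$, we have $\bigcup_j R_j\subseteq Q$, so it remains to prove the purely combinatorial bound $|\bigcup_{j=1}^k R_j|\ge\sum_j(d_{j-1}-d_j)P_j$ whenever $R_j\in\{[d_{j-1}]\times[P_j],[P_j]\times[d_{j-1}]\}$ with $d_0>\cdots>d_k=0$, $P_1<\cdots<P_k$ and $d_{j-1}\le P_j$. I would do this by induction on $k$, peeling off $R_k$ — the ``thinnest'' rectangle, whose short side is $d_{k-1}=\cII_k$. For every $j<k$, one side of $R_j$ equals $P_j\le P_{k-1}$ and the other equals $d_{j-1}$ with $\cII_k\le d_{j-1}\le p\le P_{k-1}$; a short case check on the orientations then shows that $R_j\cap R_k$ always fills $R_k$ completely in one of its two directions and reaches at most $P_{k-1}$ in the other, so $\bigcup_{j<k}R_j$ meets $R_k$ only in its first $P_{k-1}$ rows (or columns, according to $R_k$'s orientation). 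Consequently $|R_k\setminus\bigcup_{j<k}R_j|\ge\cII_k(P_k-P_{k-1})$, and the induction hypothesis applied to $R_1,\dots,R_{k-1}$ with parameter sequence $d_0,\dots,d_{k-2},0$ gives $|\bigcup_{j\le k}R_j|\ge\big(\sum_{j\le k-2}(d_{j-1}-d_j)P_j+d_{k-2}P_{k-1}\big)+\cII_k(P_k-P_{k-1})$, which one checks equals $\sum_{j\le k}(d_{j-1}-d_j)P_j$; the base case $k=1$ is just $|R_1|=d_0P_1=\cII_1P_1$. Combining the two parts, $|Q|\ge|\bigcup_j R_j|\ge\sum_j\cII_jP_j$, which is \eqref{eq:lbonsizeQ}.

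The main obstacle is the combinatorial bound of the third paragraph. When all the $R_j$ share an orientation their union is a single staircase of size exactly $\sum_j\cII_jP_j$, so there is nothing to do; the force of the lemma — and the reason the inequality is genuinely strict in general — is that in the mixed case the union exceeds every staircase, and one must verify, via the peeling induction above (which is exactly what makes the bookkeeping close), that the overlaps created by transposing some of the rectangles never outweigh the cells they add.
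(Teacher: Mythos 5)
Your argument is correct, and it takes a genuinely different route from the paper's proof. The paper works level by level: for every $\ell$ in an interval $\cO_j$ it fixes a maximum matching of $([\ell-1]\times[\ell-1])\setminus Q$ and uses the jump $\nu(\ell,Q)-\nu(\ell-1,Q)=1$ to produce one blocked vertex $z_\ell$ with at least $p-\sum_{1\le t\le j}\cII_t$ many $Q$-neighbours, transfers this degree bound into the intervals $\cT_j$ by closed-downness, and then sums these vertex degrees together with the square $[p]\times[p]$ to obtain \eqref{eq:lbonsizeQ} after a telescoping computation. You instead invoke K\"onig duality only at the $k$ levels $P_j$ (the right endpoints of the $\cO_{j-1}$, in your notation), where the closed-down property upgrades the uncovered sides of a minimum vertex cover to a complete anchored rectangle $[P_j]\times[d_{j-1}]$ or its transpose inside $Q$, and you finish with a self-contained geometric inequality for unions of origin-anchored rectangles, proved by peeling off $R_k$; I checked the peeling step and the bookkeeping, and they close exactly as you claim. (Two harmless imprecisions: the uncovered sides give $a+b\ge P_j+d_{j-1}$ rather than equality, which only strengthens the conclusion, and the degenerate case $Q=\emptyset$, where $k=0$, should be dispatched trivially.) What your route buys is a sharper structural statement --- a closed-down $Q$ with the given parameters must contain the rectangles $R_1,\dots,R_k$ --- which matches exactly the extremal construction of \refC{cor:constructQ} (all rectangles in one orientation, whose union is a staircase of area precisely $\sum_j\cII_jP_j$), and it isolates the combinatorial content into a clean rectangle-union lemma. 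The paper's counting is more local and shorter to write, since it needs no separate union lemma, at the cost of handling every level $\ell\in(p,p+q]$ outside $\cT_k$; the rectangles you extract are essentially the structure alluded to in the paper's in-proof Remark, which notes that the counted neighbours form initial segments of the first $p$ vertices on the opposite side.
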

	
	\begin{proof}

		For $0\le j\le k-1$ and $\ell\in O_j$, we take a maximum matching $M_{\ell-1}$ in $([\ell-1]\times [\ell-1])\setminus Q$. The constraint $\nu(\ell,Q)-\nu(\ell-1,Q)=1$ implies that we cannot match both $c_\ell$ and $s_\ell$ to $\{c_1,\dots,c_{\ell-1}\}\cup\{s_1,\dots, s_{\ell-1}\}$ without using edges in~$Q$ or intersecting with edges in~$M_{\ell-1}$, otherwise we get a matching in $([\ell]\times[\ell])\setminus Q$ of size $|M_{\ell-1}|+2$. Therefore for one of $c_\ell$ and $s_\ell$, which we denote by $z_\ell$, say $z_\ell=s_\ell$, all the edges $\{c_1,\dots, c_{\ell-1}\}\times \{s_\ell\}$ are either in $Q$ or incident with edges in $M_{\ell-1}$. Since by~\eqref{eq:explicitnucQ}
		\begin{equation*}
			\begin{split}
				|M_{\ell-1}|=\nu(\ell-1,Q)=\nu(\ell,Q)-1 
				=\ell-1-p+\sum_{1\le t\le j}\cII_t,         
			\end{split}
		\end{equation*}
		then 
		\begin{equation}\label{eq:lbzloj}
			|N_{Q\cap([\ell-1]\times [\ell-1])}(z_\ell)|\ge (\ell-1)-|M_{\ell-1}|=p-\sum_{1\le t\le j}\cII_t.
		\end{equation}
		For any $1\le j\le k-1$, let $\ell_j$ be the largest integer in $\cT_j$.
		For all $\ell'\in \cT_j$, since $\ell'<\ell_j+1\in \cO_j$, by the closed down property of $Q$, 
		\[N_{Q\cap([\ell_j]\times[\ell_j])}(z_{\ell_j+1})\subseteq N_{Q\cap([\ell_j]\times[\ell_j])}(z_{\ell'}),\] 
		where $z_{\ell'}$ is set to be $s_{\ell'}$ if $z_{\ell_j+1} =s_{\ell_j+1}$, and to be $c_{\ell'}$ if $z_{\ell_j+1} =c_{\ell_j+1}$.
		Therefore
		\begin{equation}\label{eq:lbzltj}
			|N_{Q\cap([\ell_j]\times[\ell_j])}(z_{\ell'})|\ge   |N_{Q\cap([\ell_j]\times[\ell_j])}(z_{\ell_j+1})|\ge p-\sum_{1\le t\le j}\cII_t.
		\end{equation}
		\begin{remark}
			Since $Q$ is closed down and $p$ is maximal, the neighbors counted in~\eqref{eq:lbzloj} and~\eqref{eq:lbzltj} are the first vertices among the first $p$ vertices on the side opposite to the side of $z_\ell$ and $z_{\ell'}$. Counting the number of edges of~$Q$ from that side also leads to the lower bound on~$|Q|$ as in~\refC{cor:IIkQ} (for example, we can count $|Q|$ from the $C$-side of the $Q$ constructed in the proof of~\refC{cor:constructQ}) --- a fact not used in the proof.
		\end{remark}
		
		Hence by the choice of~$p$, combining~\eqref{eq:lbzloj} with~\eqref{eq:lbzltj}, and applying~\eqref{eq:sumII=P}, we have
		\begin{align*}
			|Q|
			&\ge \Bigl|[p]\times [p]\Bigr|+\sum_{\ell\in\cO_0}|N_{Q\cap([\ell-1]\times[\ell-1])}(z_\ell)|+\sum_{j=1}^{k-1}\Big(\sum_{\ell'\in\cT_j}|N_{Q\cap([\ell_j]\times[\ell_j])}(z_{\ell'})|+\sum_{\ell\in\cO_j}|N_{Q\cap([\ell-1]\times[\ell-1])}(z_\ell)|\Big)\\
			&\ge p(p+\cI_0)+\sum_{j=1}^{k-1} (\cII_j+\cI_j)(p-\sum_{1\le t\le j}\cII_t)\\
			&=\sum_{j=1}^k \cII_j(p+\cI_0)+\sum_{j=1}^{k-1} (\cII_j+\cI_j)\sum_{j<t\le k}\cII_t\\
			&=\sum_{j=1}^k \cII_j(p+\cI_0)+\sum_{t=2}^k\cII_t\Big( \sum_{j=1}^{t-1}(\cII_j+\cI_j) \Big)\\
			&=\sum_{j=1}^k \cII_j\Big(p+\cI_0+\sum_{\ell=1}^{j-1}(\cII_\ell+\cI_\ell)\Big),
		\end{align*}
		which completes the proof.
	\end{proof}
	
	The next result will not be used, but it is of independent interest.
	\begin{corollary}\label{cor:constructQ}
		For any integers $0\le p\le q$ and division of the interval $(p,p+q]$ into (left-open and right-closed with integer endpoints) sub-intervals $\cO_0,\cT_1,\cO_1,\cT_2,\dots,\cO_{k-1},\cT_k$ in order such that except possibly $\cO_0$, all the sub-intervals are non-empty, for $\cI_j=|\cO_j|$ and $\cII_j=|\cT_j|$, there exists a closed down $Q\subseteq C\times S$ (assuming $|C|=|S|\ge p+q$) satisfying~\eqref{eq:defOj}--\eqref{eq:sumII=P} and attaining the lower bound in~\eqref{eq:lbonsizeQ}, i.e.,
		\[  |Q|=\sum_{j=1}^k \cII_j\Big(p+\cI_0+\sum_{\ell=1}^{j-1}(\cII_\ell+\cI_\ell)\Big).\]
	\end{corollary}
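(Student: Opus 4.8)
The plan is to exhibit the extremal $Q$ explicitly as a ``staircase'', namely a union of $k$ nested rectangles $[w_i]\times[h_i]$, and then verify the prescribed $\mathcal Q$ by computing $\nu(\ell,Q)$ for all $\ell$. I first record that \eqref{eq:sumII=P} is a \emph{necessary} condition on the given data --- any $Q$ with the stated interval structure has $\nu(p+q,Q)=p+q$, whence $\sum_j\cII_j=p$ and $\sum_j\cI_j=q-p$ exactly as in \refC{cor:IIkQ} --- so we assume it; we may also assume $q\ge 1$ (hence $p=\sum_j\cII_j\ge k\ge 1$), the case $q=0$ giving $Q=\emptyset$. For $1\le i\le k$ put
\[
w_i:=p+\cI_0+\sum_{t=1}^{i-1}(\cII_t+\cI_t),\qquad h_i:=\sum_{t=i}^{k}\cII_t ,
\]
so that $w_1<\dots<w_k$ (each $\cII_t\ge 1$), $h_1>\dots>h_k\ge 1$, $h_1=p$, and by telescoping $w_i+h_i=2p+\sum_{t=0}^{i-1}\cI_t$; in particular $w_k+h_k=p+q$. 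Define $Q:=\bigcup_{i=1}^{k}\bigl([w_i]\times[h_i]\bigr)\subseteq C\times S$ (legitimate since $|C|=|S|\ge p+q\ge w_k$).

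The elementary checks: $Q$ is closed down, being a union of sets $[a]\times[b]$; $[p]\times[p]\subseteq[w_1]\times[h_1]\subseteq Q$ while $c_{p+1}s_{p+1}\notin Q$ (every $h_i\le p$), so the parameter of $Q$ equals $p$; and $Q$ has, on the $S$-side, exactly $h_i-h_{i+1}=\cII_i$ rows of width $w_i$ for each $1\le i\le k$ (with $h_{k+1}:=0$) and no other nonempty rows, so $|Q|=\sum_{i=1}^{k}\cII_i\,w_i$, which is the right-hand side of~\eqref{eq:lbonsizeQ}. Hence it only remains to show that $\nu(\ell,Q)$ is given by~\eqref{eq:explicitnucQ} with $\cO_0=(p,w_1]$, $\cT_j=(w_j,w_j+\cII_j]$, $\cO_j=(w_j+\cII_j,w_{j+1}]$ for $1\le j\le k-1$, $\cT_k=(w_k,p+q]$ --- these being exactly the intervals of~\eqref{eq:defOj}--\eqref{eq:defTj} --- after which \eqref{eq:sumII=P} and the attainment of~\eqref{eq:lbonsizeQ} follow from the previous sentence, maximality of $p$ was checked above, and minimality of $q$ follows by evaluating~\eqref{eq:explicitnucQ} at $\ell=p+q-1$.

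The heart is the computation of $\nu(\ell,Q)$, for which I would use the general identity, valid for any closed down $Q$,
\[
\nu(\ell,Q)=2\ell-\max\{\,a+b:\ 0\le a,b\le\ell,\ [a]\times[b]\subseteq Q\,\},
\]
obtained from K\"onig's theorem ($\nu=\tau=2\ell-\alpha$ on the bipartite graph $([\ell]\times[\ell])\setminus Q$) together with the observation that a maximum independent set of that graph may be taken of the form $\{c_i:i\le a\}\cup\{s_j:j\le b\}$ with $[a]\times[b]\subseteq Q$ (an independent set $I\cup J$ satisfies $I\times J\subseteq Q$, and closed-downness lets one replace $I,J$ by initial segments of the same sizes); this already re-derives \refL{lemma:nupq}. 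For the staircase, $[a]\times[b]\subseteq Q$ iff $b=0$ or ($a\le w_i$ and $b\le h_i$ for some $i$), so
\[
\nu(\ell,Q)=\min\Bigl(\ell,\ \min_{1\le i\le k}\bigl(2\ell-\min(\ell,w_i)-\min(\ell,h_i)\bigr)\Bigr).
\]
The rest is a routine case analysis on the position of $\ell$: for $\ell\in\cO_0$ all $w_i\ge w_1\ge\ell$, giving $\nu(\ell,Q)=\ell-p$; for $\ell\in\cT_j\cup\cO_j$ one has $w_j<\ell\le w_{j+1}$ (set $w_{k+1}:=p+q$ and $\cO_k:=\emptyset$), hence $\min(\ell,w_i)=w_i$ for $i\le j$ and $=\ell$ for $i>j$, and $\min(\ell,h_i)=h_i$ for all $i$ (since $\ell>w_1\ge p=h_1$); as $2\ell-w_i-h_i$ is non-increasing in $i$ (by the telescoping identity) and $\ell-h_i$ is increasing in $i$, the inner minimum is attained at $i=j$ and $i=j+1$, and substituting $w_i+h_i=2p+\sum_{t<i}\cI_t$ and $\sum_t\cII_t=p$ yields $\nu(\ell,Q)=\ell-p+\sum_{1\le t\le j}\cII_t$ on $\cO_j$ and $\nu(\ell,Q)=2\ell-2p-\sum_{0\le t<j}\cI_t$ on $\cT_j$; for $\ell\le p$ and $\ell>p+q$ one gets $0$ and $\ell$. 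This matches~\eqref{eq:explicitnucQ} line by line.

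The only genuine obstacle is a matter of discovery: guessing that the extremal $Q$ is this particular staircase $\bigcup_i[w_i]\times[h_i]$, and noticing that K\"onig's theorem together with closed-downness collapses $\nu(\ell,Q)$ to the clean rectangle formula above; after that the verification is bookkeeping. The one point needing care is to keep the degenerate rectangle ($b=0$, equivalently $a=0$) in the maximum --- equivalently the bare ``$\ell$'' inside the outer $\min$ --- without which one would obtain the impossible $\nu(\ell,Q)>\ell$ for $\ell>p+q$.
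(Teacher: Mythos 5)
Your construction is, up to swapping the roles of $C$ and $S$, exactly the $Q$ the paper writes down (your staircase $\bigcup_i [w_i]\times[h_i]$ is the transpose of the paper's union of column-blocks of widths $\cII_j$, each paired with the initial segment of symbols of length $w_j$), so the approach is essentially the same; the paper declares the verification routine, whereas you carry it out correctly via the K\"onig identity $\nu(\ell,Q)=2\ell-\max\{a+b:\,[a]\times[b]\subseteq Q\}$ and a sound case analysis. Your explicit remark that \eqref{eq:sumII=P} must be assumed on the given division (it is forced by \refC{cor:IIkQ}) is the right reading of the statement, and the paper's construction implicitly requires it as well.
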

	\begin{proof}
		We set 
		\[ Q:=\bigcup_{j=1}^k\Bigg( \{c_{p-\sum_{\ell=1}^{j}\cII_\ell+1},\dots,c_{p-\sum_{\ell=1}^{j-1}\cII_\ell}\}\times \Big(\{s_\ell:\ell\in [p]\cup \cO_0\}\cup \cup_{\ell=1}^{j-1}\{s_\ell:\ell\in \cT_\ell\cup \cO_\ell\} \Big) \Bigg).  \]
		It is routine to verify~$Q$ satisfies all the requirements.
	\end{proof}

	\subsection{Proof of~\refT{thm:widetau2=n}}
	
	We are now set to prove~\refT{thm:widetau2=n}. We prove the contrapositive statement.
	\begin{theorem}\label{thm:tau2notwide}
		If $\ctau(H(Y))<|Y|$, then the Young diagram $Y$ is not wide.
	\end{theorem}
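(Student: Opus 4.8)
I would establish the equivalent positive statement: given a $2$-cover of $H(Y)$ of size less than $|Y|$, I would produce a set $S$ of rows of $Y$ such that the sub-diagram $Z$ of $Y$ on the rows in $S$ fails to dominate its conjugate $Z'$, exhibiting $Y$ as non-wide. First I would pass to a structured optimal cover. Let $P$ be a minimum $2$-cover of $H(Y)$, so $|P| = \ctau(H(Y)) < |Y| = \sum_{i=1}^m a_i$. By \refL{lemma:shifted2cover} I may assume $Q := P \cap (C\times S)$ is closed down, and then \refC{cor:PtoQ} gives $|P| \ge |Q| + \sum_{i=1}^m \nu(a_i,Q)$, hence
\[
|Q| < \sum_{i=1}^{m}\bigl(a_i - \nu(a_i,Q)\bigr).
\]
Since $Q = \emptyset$ would force the right side to equal $|Y|$, we have $Q \ne \emptyset$, so $p \ge 1$. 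Attaching to $Q$ the parameters $p,q,\cO_0,\cT_1,\dots,\cO_{k-1},\cT_k,\cI_0,\dots,\cII_k$ of \refS{sec:widetau2=n}, writing $\gamma_j := \sum_{\ell\ge j}\cII_\ell$ and $A_{j-1} := p + \cI_0 + \sum_{\ell=1}^{j-1}(\cII_\ell + \cI_\ell)$ for the left endpoint of $\cT_j$, and applying \refL{lem:Q_lower_bound} (so $|Q| \ge \sum_{j=1}^{k}\cII_j A_{j-1}$), the displayed bound becomes $\sum_{j=1}^{k}\cII_j A_{j-1} < \sum_{i=1}^m (a_i - \nu(a_i,Q))$.

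Next I would rewrite the right side by means of \refL{lemma:fcQexplicit}. Set $g(a) := a - \nu(a,Q)$. That lemma shows $g$ is unimodal: $g(a) = a$ for $a \le p$, and $g$ is non-increasing for $a \ge p$, falling to $0$ at $p+q$. Therefore, for each $1 \le t \le p$, the level set $\{a : g(a) \ge t\}$ is an integer interval $[t, u_t]$ with $u_t \ge p$, and inspection of the formula gives $u_t = A_{j-1} + \gamma_j - t$, where $j$ is the step containing $t$ (i.e. $\gamma_{j+1} < t \le \gamma_j$); moreover $t \mapsto u_t + 1$ is a bijection from $\{1,\dots,p\}$ onto $\bigcup_{j=1}^k \cT_j$. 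Writing $b_\ell := b_\ell(Y)$ and summing by level,
\[
\sum_{i=1}^m g(a_i) = \sum_{t=1}^{p}\#\{i : t \le a_i \le u_t\} = \sum_{t=1}^{p}(b_t - b_{u_t+1}) = \sum_{\ell=1}^{p}b_\ell - \sum_{v\in\bigcup_j \cT_j}b_v,
\]
so the inequality from the first step reads $\sum_{j=1}^{k}\cII_j A_{j-1} + \sum_{v\in\bigcup_j\cT_j}b_v < \sum_{\ell=1}^{p}b_\ell$.

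Now I would pick the witness. Put $N := b_{p+q}(Y) = \#\{i : a_i \ge p+q\}$ and $S := \{i : a_i < p+q\}$; note $S \ne \emptyset$, since $\sum_i g(a_i) > |Q| \ge p^2 > 0$ forces some $a_i < p+q$. Let $Z$ be the sub-diagram of $Y$ on the rows $S$; then $a_r(Z) = a_{N+r}$, and since $p < p+q$ we have $b_r(Z) = b_r(Y) - N$ for $r = 1,\dots,p$. Thus to prove that $Z$ does not dominate $Z'$ — and hence that $Y$ is not wide — it suffices to check $\sum_{r=1}^{p}a_r(Z) < \sum_{r=1}^{p}b_r(Z)$, i.e.
\[
\sum_{r=N+1}^{N+p}a_r \;<\; \sum_{\ell=1}^{p}b_\ell - pN ,
\]
and, by the inequality obtained at the end of the second step, this follows at once once one has the purely combinatorial estimate
\[
pN + \sum_{r=N+1}^{N+p}a_r \;\le\; \sum_{j=1}^{k}\cII_j A_{j-1} + \sum_{v\in\bigcup_j\cT_j}b_v(Y).
\]

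I expect this last estimate to be the heart of the matter. To prove it I would argue by double counting: $a_{N+1},\dots,a_{N+p}$ are the lengths of the $p$ longest rows of $Y$ that are shorter than $p+q$, and I would charge each such row cell-by-cell against the ``staircase'' whose $j$-th step has width $\cII_j$ and height $A_{j-1}$ (accounting for $\sum_j\cII_j A_{j-1}$) together with the rows counted by $b_v(Y)$ for $v \in \cT_j$, using $\sum_{j}\cII_j = p$ and $\sum_{j}\cI_j = q-p$ (\refC{cor:IIkQ}), elementary identities among the parameters such as $A_{j-1} + \gamma_j = 2p + \sum_{\ell=0}^{j-1}\cI_\ell$, and the monotonicity $b_1 \ge b_2 \ge \cdots$. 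The delicate point, and the main obstacle, is getting the correspondence between the columns of $Y$ and the steps of the $Q$-staircase exactly right, and correctly handling the rows whose length lands inside some $\cO_j$ rather than inside a $\cT_j$; the two reduction steps preceding it are routine given the lemmas already in the paper.
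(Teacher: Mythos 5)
Your two reduction steps are correct, and the level-set identity $\sum_i\bigl(a_i-\nu(a_i,Q)\bigr)=\sum_{t=1}^{p}b_t-\sum_{v\in\bigcup_j\cT_j}b_v$ is an elegant reformulation that the paper does not use. However, the step you defer as ``the heart of the matter'' --- the claimed purely combinatorial estimate
\[
pN+\sum_{r=N+1}^{N+p}a_r\;\le\;\sum_{j=1}^{k}\cII_j A_{j-1}+\sum_{v\in\bigcup_j\cT_j}b_v(Y)
\]
--- is false as an unconditional statement, so this is a genuine gap and not a routine verification. Take $p=2$, $k=2$, $\cI_0=0$, $\cII_1=1$, $\cI_1=2$, $\cII_2=1$, so $q=4$, $\cT_1=\{3\}$, $\cO_1=\{4,5\}$, $\cT_2=\{6\}$; a closed down $Q$ realizing these parameters exists, e.g.\ $Q=\{c_1\}\times\{s_1,\dots,s_5\}\cup\{c_2\}\times\{s_1,s_2\}$. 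Let $Y$ consist of two rows of length $5$. Then $N=0$ and the left-hand side is $10$, while $\sum_j\cII_jA_{j-1}=1\cdot2+1\cdot5=7$ and $b_3+b_6=2+0=2$, so the right-hand side is $9$. (This $Y$ is wide, so it never arises under the hypothesis --- but that only shows your estimate cannot be proved without re-using the hypothesis, which your plan does not do.) The failure mode is exactly the one you flag as delicate: a row whose length sits deep inside a long interval $\cO_j$ with $j\ge1$ contributes roughly $q$ to the left side, while the staircase term can have most of its $\cII$-mass attached to small heights $A_{j-1}$.

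The underlying structural difference from the paper is your choice of witness: a single sub-diagram compared at the single level $p$. The paper instead uses the nested sub-diagrams $Z_1\subseteq\cdots\subseteq Z_k$ (rows of length at most the right endpoint of $\cT_j$) and compares the first $\cII_j$ rows of $Z_j$ with its first $\cII_j$ columns for each $j$: either some $Z_j$ with $j<k$ already fails to dominate its conjugate, or those $k-1$ domination inequalities are subtracted from the aggregate bound coming from $|P|<|Y|$ to force failure of $Z_k$ at level $\cII_k$ (see \refCl{claim:IIjHj} and the telescoping around \eqref{eq:sumHj}--\eqref{eq:eachHj}). That multi-level comparison is precisely what controls the rows landing inside the $\cO_j$'s, and it is the missing idea here; to repair your argument you would have to reintroduce a per-level use of the hypothesis, which essentially reconstructs the paper's proof.
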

	\subsubsection{A toy example}\label{subsubsec:toy}
	By~\refL{lemma:shifted2cover}, there exists a minimum 2-cover~$P$ of $H(Y)$ satisfying $P\cap(C\times S)$ is closed down and $|P|=\ctau(H(Y))<|Y|$. Furthermore, by~\refC{cor:PtoQ} and the assumption $|P|<|Y|$, $Q=P\cap(C\times S)$ is non-empty.
	In a toy case, we assume that
	$Q=[p]\times [q]$ for some $1\le p\le q$. For this $Q$, by~\refL{lemma:nupq} we have $\cO_0=(p,q]$, $\cT_1=(q, p+q]$, $\cI_0=q-p$, and $\cII_1=p$. Let $Z$ be the partition formed by the rows of $Y$ whose lengths are at most $p+q$. We shall show that the sum of the lengths of the first $\cII_1=p$ rows of $Z$ is less than that of the first $p$ columns of $Z$. Therefore $Z$ does not dominate $Z'$, hence  $Y$ is not wide.
	
	Indeed, let 
	\[D=\{i:a_i\le p \},\; A_{\cO_0}=\{i: a_i\in \cO_0\},\; A_{\cT_1}=\{i:a_i\in \cT_1 \}, \text{ and } U=\{i: a_i>p+q\}.  \]
	Let $A_{T_1}[\cII_1]$ be the set of the first $\cII_1=p$ elements of $A_{T_1}$ if $\cII_1\le |A_{T_1}|$, and to be $A_{T_1}$ if $\cII_1>|A_{T_1}|$.
	
	The first observation is
	\begin{equation}\label{eq:toyineq}
		\text{the sum of the lengths of the first $\cII_1$ rows of $Z$}\le pq+\sum_{i\in A_{\cT_1}[\cII_1]}(a_i-q).
	\end{equation}
	To see this, note that if $|A_{\cT_1}|\ge \cII_1=p$, then the RHS is equal to the LHS, which is $\cII_1\cdot q+\sum_{i\in A_{\cT_1}[\cII_1]}(a_i-q)=\sum_{i\in A_{\cT_1}[\cII_1]}a_i$. If  $|A_{\cT_1}|< \cII_1$, then $|A_{\cT_1}|q+\sum_{i\in A_{\cT_1}[\cII_1]}(a_i-q)$  is the sum of the lengths of the first $|A_{\cT_1}|$ rows of~$Z$, and the sum of the lengths of the remaining $\cII_1-|A_{\cT_1}|$ rows is at most $(p-|A_{\cT_1}|)q$, because each of these remaining rows is below those with indices in~$A_{\cT_1}$ and has length at most $q$.
	
	By~\refC{cor:PtoQ} and~\refL{lemma:nupq}, 
	we have
	\begin{align*}
		\ctau(H(Y))=|P|&\ge |Q|+\sum_i\nu(a_i,Q)\\
		&=pq+\sum_{i\in A_{\cO_0}}(a_i-p)+\sum_{i\in A_{\cT_1}[\cII_1]}(2a_i-p-q)+\sum_{i\in A_{\cT_1}\setminus A_{\cT_1}[\cII_1]}(2a_i-p-q)+\sum_{i\in U}a_i.
	\end{align*}
	Together with the assumption $\ctau(H(Y))<|Y|=\sum_{i\in D\cup A_{\cO_0}\cup A_{\cT_1}\cup U}a_i$, we have
	\begin{equation}\label{eq:toyeq1}
		pq+\sum_{i\in A_{\cO_0}}(a_i-p)+\sum_{i\in A_{\cT_1}[\cII_1]}(2a_i-p-q)+\sum_{i\in A_{\cT_1}\setminus A_{\cT_1}[\cII_1]}(2a_i-p-q)+\sum_{i\in U}a_i< \sum_{i\in D\cup A_{\cO_0}\cup A_{\cT_1}\cup U}a_i.    
	\end{equation}
	Note that
	\begin{align}
		&\text{LHS of~\eqref{eq:toyeq1}}\nonumber\\
		=& \sum_{i\in A_{\cO_0}}a_i + \sum_{i\in A_{T_1}}a_i +\Big(pq+ \sum_{i\in A_{\cT_1}[\cII_1]}(a_i-q)\Big)+\sum_{i\in  A_{T_1}\setminus A_{T_1}[\cII_1]}(a_i-q)+\sum_{i\in U}a_i-p(|A_{\cO_0}|+|A_{\cT_1}|)\nonumber\\
		\ge& \text{ the sum of the lengths of the first $\cII_1$ rows of $Z$ } + \sum_{i\in A_{\cO_0}\cup A_{\cT_1}\cup U} a_i - p(|A_{\cO_0}|+|A_{\cT_1}|),\label{eq:toyeq2}
	\end{align}
	where the last inequality is by~\eqref{eq:toyineq} and the fact that for each $i\in A_{\cT_1}\setminus A_{\cT_1}[\cII_1]$, $a_i\ge q$.
	Then comparing~\eqref{eq:toyeq2} with the RHS of~\eqref{eq:toyeq1}, after cancellation, we have
	\[\text{the sum of the lengths of the first $\cII_1$ rows of $Z$}< \sum_{i\in D}a_i+p(|A_{\cO_0}|+|A_{\cT_1}|), \]
	where since the rows with indices in $A_{\cO_0}\cup A_{\cT_1}$ have lengths greater than~$p$, the RHS is the sum of the lengths of the first $\cII_1=p$ columns of $Z$. We are done.
	
	\subsubsection{Proof of~\refT{thm:tau2notwide}}\label{subsec:mainproof}
	\begin{proof}[Proof of~\refT{thm:tau2notwide}]
		By~\refL{lemma:shifted2cover}, there exists a minimum 2-cover~$P$ of $H(Y)$ satisfying $P\cap(C\times S)$ is closed down and $|P|=\ctau(H(Y))<|Y|$. Let $Q=P\cap(C\times S)$. By~\refC{cor:PtoQ} and the assumption $|P|<|Y|$, we have $Q\neq\emptyset$.
		By~\refC{cor:PtoQ} and~\refL{lem:Q_lower_bound}, and using notations there, where the non-emptiness of~$Q$ implies $1\le p\le q$, we have
		\begin{equation}\label{eq:fQ<n}
			\begin{split}
				f(Q)&:= \sum_{j=1}^k \cII_j\Big(p+\cI_0+\sum_{\ell=1}^{j-1}(\cII_\ell+\cI_\ell)\Big)+\sum_{i}\nu(a_i,Q)\\
				&\le |Q|+\sum_i\nu(a_i,Q)\le |P|<\sum_i a_i,
			\end{split}
		\end{equation}
		where, as above, $a_i$ is the length of the $i$th row of $Y$.

		Using the notation in~\refL{lemma:fcQexplicit}, let 
		\begin{align*}
			&D=\{i:a_i\le p \},\\ 
			&A_{\cO_j}=\{i: a_i\in \cO_j\} \text{ for $0\le j\le k-1$,}\\
			&A_{\cT_j}=\{i:a_i\in \cT_j \}\text{ for $1\le j\le k$},\\
			&U=\{i: a_i>p+q\}.
		\end{align*}
		For $1\le j\le k$, let $Z_j$ be the subpartition of $Y$ that consists of the rows with index in $D\cup_{t=0}^{j-1}A_{O_t}\cup_{\ell=1}^jA_{T_\ell}$. We define $A_{T_j}[\cII_j]$ to be the set of the first $\cII_j$ elements of $A_{T_j}$ if $\cII_j\le |A_{T_j}|$, and to be $A_{T_j}$ if $\cII_j>|A_{T_j}|$.
		
		We aim to show that the sum of the lengths of the first $\cII_k$ rows of $Z_k$ is less than that of the first $\cII_k$ columns of $Z_k$, which implies that~$Z_k$ does not dominate~$Z_k'$ and then~$Y$ is not wide.
		
		(Note that in the toy example in~\refS{subsubsec:toy}, $k=1$ and we set $Z=Z_k=Z_1$.)

		As before, the first observation is an upper bound on the sum of the lengths of the first $\cII_j$ rows of $Z_j$.
		\begin{claim}\label{claim:IIjHj}
			For every $1\le j\le k$,   the sum of the lengths of the first $\cII_j$ rows of $Z_j$ is at most
			\begin{equation*}
				h_j:=   \cII_j\Big(p+\cI_0+\sum_{\ell=1}^{j-1}(\cII_\ell+\cI_\ell)\Big)+\sum_{i\in A_{\cT_j}[\cII_j]}(a_i-2p-\sum_{0\le t<j}\cI_t+\sum_{\ell=j}^k\cII_\ell).
			\end{equation*}
		\end{claim}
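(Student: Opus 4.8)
The plan is to first rewrite $h_j$ in a transparent form, and then to read off the sum of the lengths of the top $\cII_j$ rows of $Z_j$ essentially by inspecting the sorted list of row lengths.

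First I would introduce the abbreviation $\beta_j:=p+\cI_0+\sum_{\ell=1}^{j-1}(\cII_\ell+\cI_\ell)$, the coefficient of $\cII_j$ in the definition of $h_j$. Using the first identity of~\eqref{eq:sumII=P}, $\sum_{\ell=1}^{k}\cII_\ell=p$, a line of bookkeeping gives $2p+\sum_{0\le t<j}\cI_t-\sum_{\ell=j}^{k}\cII_\ell=p+\sum_{t=0}^{j-1}\cI_t+\sum_{\ell=1}^{j-1}\cII_\ell=\beta_j$, so the summand inside $h_j$ becomes $a_i-\beta_j$ and
\[ h_j=\cII_j\,\beta_j+\sum_{i\in A_{\cT_j}[\cII_j]}(a_i-\beta_j). \]
Comparing with~\eqref{eq:defTj}, $\beta_j$ is exactly the (open, left) endpoint of the interval $\cT_j$; consequently every row of $Y$ whose length lies in $\cT_j$ has length $\ge\beta_j+1$, while by~\eqref{eq:defOj}--\eqref{eq:defTj} --- together with the trivial fact that rows indexed by $D$ have length $\le p\le\beta_j$ --- every row of $Z_j$ \emph{not} indexed by $A_{\cT_j}$ has length at most $\beta_j$. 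In other words, when the rows of $Z_j$ are listed in weakly decreasing order of length, the longest ones are precisely those indexed by $A_{\cT_j}$, and everything after them has length $\le\beta_j$.

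Granting this structure, the claim splits into two cases. If $\cII_j\le|A_{\cT_j}|$, then $A_{\cT_j}[\cII_j]$ is the block of the $\cII_j$ longest rows of $A_{\cT_j}$, which are the $\cII_j$ longest rows of $Z_j$; since $|A_{\cT_j}[\cII_j]|=\cII_j$, the sum of their lengths is $\sum_{i\in A_{\cT_j}[\cII_j]}a_i=\cII_j\beta_j+\sum_{i\in A_{\cT_j}[\cII_j]}(a_i-\beta_j)=h_j$, so the claim holds with equality. If $\cII_j>|A_{\cT_j}|$, then $A_{\cT_j}[\cII_j]=A_{\cT_j}$; the first $|A_{\cT_j}|$ rows of $Z_j$ are exactly those indexed by $A_{\cT_j}$, contributing $\sum_{i\in A_{\cT_j}}a_i$, and each of the remaining $\cII_j-|A_{\cT_j}|$ rows of $Z_j$ has length $\le\beta_j$ (a nonexistent row being read as length $0$ should $Z_j$ have fewer than $\cII_j$ rows). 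Hence the sum of the lengths of the first $\cII_j$ rows of $Z_j$ is at most $\sum_{i\in A_{\cT_j}}a_i+(\cII_j-|A_{\cT_j}|)\beta_j=\cII_j\beta_j+\sum_{i\in A_{\cT_j}}(a_i-\beta_j)=h_j$.

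There is no genuine difficulty here: all the content is the single observation that within $Z_j$ the top block of rows (by decreasing length) is $A_{\cT_j}$, with every row below it of length at most $\beta_j$. The only thing to be careful about is the arithmetic --- the identity $2p+\sum_{0\le t<j}\cI_t-\sum_{\ell=j}^k\cII_\ell=\beta_j$ and the identification of $\beta_j$ with the left endpoint of $\cT_j$ --- both immediate from~\eqref{eq:defOj}--\eqref{eq:defTj} and~\eqref{eq:sumII=P}; this generalizes the estimate~\eqref{eq:toyineq} of the toy case, where $j=k=1$ and $\beta_1=q$.
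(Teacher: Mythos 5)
Your proposal is correct and follows essentially the same route as the paper's proof: the same simplification of the summand via $\sum_{\ell=1}^{k}\cII_\ell=p$ (your $\beta_j$ is the paper's $w_j$), the same identification of this quantity with the left endpoint of $\cT_j$, and the same two-case analysis according to whether $\cII_j\le|A_{\cT_j}|$ (equality) or $\cII_j>|A_{\cT_j}|$ (the remaining rows bounded by $\beta_j$). Your explicit remark that rows of $Z_j$ outside $A_{\cT_j}$ all have length at most $\beta_j$ is just a slightly more spelled-out version of the paper's appeal to~\eqref{eq:defTj}.
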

		Note that the first item is part of $f(Q)$ in~\eqref{eq:fQ<n}.
		\begin{proof}[Proof of~\refCl{claim:IIjHj}]
			To verify this, first observe that since $p-\sum_{\ell=j}^k\cII_\ell=\sum_{\ell=1}^{j-1}\cII_\ell$ by~\eqref{eq:sumII=P}, then the term in the second summand of $h_j$ satisfies
			\begin{align*}
				&a_i-2p-\sum_{0\le t<j}\cI_t+\sum_{\ell=j}^k\cII_\ell\\
				=& a_i-p-\sum_{0\le t<j}\cI_t-(p-\sum_{\ell=j}^k\cII_\ell)\\
				=&a_i-p-\sum_{0\le t<j}\cI_t-\sum_{\ell=1}^{j-1}\cII_\ell =a_i-\Big(p+\cI_0+\sum_{\ell=1}^{j-1}(\cII_\ell+\cI_\ell)\Big).
			\end{align*}
			
			Note that $w_j:=p+\cI_0+\sum_{\ell=1}^{j-1}(\cII_\ell+\cI_\ell)\textit{}$ appears in the first part of the definition of~$h_j$.
			
			If $\cII_j\le |A_{T_j}|$, then $h_j=\cII_jw_j+\sum_{i\in A_{\cT_j}[\cII_j]}(a_i-w_j)=\sum_{i\in A_{\cT_j}[\cII_j]}a_i$, which is the sum of the lengths of the first $\cII_j$ rows of $Z_j$. 
			
			If $\cII_j> |A_{T_j}|$, then
			\begin{align*}
				h_j=&   |A_{\cT_j}|\cdot w_j+\sum_{i\in A_{\cT_j}}(a_i-w_j)+(\cII_j-|A_{\cT_j}|)\cdot w_j\\
				=&\Big(\sum_{i\in A_{\cT_j}}a_i\Big)+(\cII_j-|A_{\cT_j}|)\cdot w_j.
			\end{align*}
			Since the $|A_{\cT_j}|+1,\dots,\cII_j$th rows of $Z_j$ are below the rows with indices in $A_{\cT_j}$,  by~\eqref{eq:defTj} in~\refL{lemma:fcQexplicit}, each of these $\cII_j-|A_{\cT_j}|$ rows has size at most $w_j=p+\cI_0+\sum_{\ell=1}^{j-1}(\cII_\ell+\cI_\ell)$. Therefore in this case, the sum of the lengths of the first $\cII_j$ rows of $Z_j$ is at most $h_j$.
		\end{proof}

		Together with~\eqref{eq:explicitnucQ} in~\refL{lemma:fcQexplicit}, we can rewrite the representation of $f(Q)$ in~\eqref{eq:fQ<n} as  follows.
		\begin{align}
			f(Q)=&\sum_{j=1}^k \cII_j\Big(p+\cI_0+\sum_{\ell=1}^{j-1}(\cII_\ell+\cI_\ell)\Big)\nonumber\\
			&+\sum_{j=0}^{k-1}\sum_{i\in A_{\cO_j}}(a_i-p+\sum_{1\le t\le  j}\cII_t)+
			\sum_{j=1}^k\sum_{i\in A_{\cT_j}}(2a_i-2p-\sum_{0\le t<j}\cI_t)+\sum_{i\in U}a_i\nonumber\\
			=&\sum_{j=1}^k \cII_j\Big(p+\cI_0+\sum_{\ell=1}^{j-1}(\cII_\ell+\cI_\ell)\Big)\nonumber\\
			&+\sum_{j=0}^{k-1}\sum_{i\in A_{\cO_j}}(a_i-p+\sum_{1\le t\le  j}\cII_t)+\sum_{j=1}^k\sum_{i\in A_{\cT_j}}a_i\nonumber\\
			&+\sum_{j=1}^k\Big(\sum_{i\in A_{\cT_j}[\cII_t]}+\sum_{i\in A_{\cT_j}\setminus A_{\cT_j}[\cII_j]}\Big)(a_i-2p-\sum_{0\le t<j}\cI_t+\sum_{\ell=j}^k\cII_\ell)\nonumber\\
			&-\sum_{j=1}^k|A_{\cT_j}|\sum_{\ell=j}^k\cII_\ell+\sum_{i\in U}a_i\nonumber\\
			=&\sum_{j=1}^kh_j+\sum_{j=0}^{k-1}\sum_{i\in A_{\cO_j}}\Big(a_i-(p-\sum_{1\le t\le  j}\cII_t)\Big)+\sum_{j=1}^k\sum_{i\in A_{\cT_j}}a_i\nonumber\\
			&+\sum_{j=1}^k\sum_{i\in A_{\cT_j}\setminus A_{\cT_j}[\cII_j]}\Big(a_i-p-\sum_{0\le t<j}\cI_t-(p-\sum_{\ell=j}^{k}\cII_\ell)\Big)\nonumber\\
			&-\sum_{j=1}^k|A_{\cT_j}|\sum_{\ell=j}^k\cII_\ell +\sum_{i\in U}a_i\nonumber\\
			=&\sum_{j=1}^kh_j+\sum_{i\in Z_k\cup U\setminus D}a_i-\sum_{j=1}^k|A_{\cT_j}|\sum_{\ell=j}^k\cII_\ell-\sum_{j=0}^{k-1}|A_{\cO_j}|\sum_{\ell=j+1}^k\cII_\ell\nonumber\\
			&+\sum_{j=1}^k\sum_{i\in A_{\cT_j}\setminus A_{\cT_j}[\cII_j]}(a_i-p-\sum_{0\le t<j}\cI_t-\sum_{\ell=1}^{j-1}\cII_\ell).\nonumber
		\end{align}
		For the last term, for $i\in A_{\cT_j}$, by~\eqref{eq:defTj} we have $a_i-p-\sum_{0\le t<j}\cI_t-\sum_{\ell=1}^{j-1}\cII_j\ge 0$. Therefore
		\begin{equation}\label{eq:fQlowerboundside}
			f(Q)\ge  \sum_{j=1}^kh_j+\sum_{i\in Y_k\cup U\setminus D}a_i-\sum_{j=1}^k|A_{\cT_j}|\sum_{\ell=j}^k\cII_\ell-\sum_{j=0}^{k-1}|A_{\cO_j}|\sum_{\ell=j+1}^k\cII_\ell. 
		\end{equation}  
		
		On the other hand, by~\eqref{eq:fQ<n} we have
		\begin{equation}\label{eq:fQsumai}
			f(Q)<  \sum_{i\in Z_k\cup U}a_i.
		\end{equation}
		Combining~\eqref{eq:fQlowerboundside} with~\eqref{eq:fQsumai}, we have
		\begin{equation}\label{eq:sumHj}
			\begin{split}
				\sum_{j=1}^kh_j &< \sum_{i\in D}a_i + \sum_{j=1}^k|A_{\cT_j}|\sum_{\ell=j}^k\cII_\ell+\sum_{j=0}^{k-1}|A_{\cO_j}|\sum_{\ell=j+1}^k\cII_\ell\\
				&=\sum_{i\in D} a_i +\sum_{j=1}^k\cII_j(\sum_{1\le t\le j}|A_{\cT_t}|+\sum_{0\le \ell<j}|A_{\cO_\ell}| ).
			\end{split}
		\end{equation}
		We may assume that for each $1\le j\le k-1$, the sum of the lengths of the first $\cII_j$ rows of $Z_j$ is at least that of the first $\cII_j$ columns of $Z_j$, which is 
		\[\cII_j(\sum_{1\le t\le j}|A_{\cT_t}|+\sum_{0\le \ell <j}|A_{\cO_\ell}|)+\sum_{i:\: a_i\le \cII_j}a_i+\sum_{i:\: a_i>\cII_j, a_i\in D}\cII_j,\]
		otherwise $Z_j$ does not dominate $Z_j'$ and we already prove $Y$ is not wide.
		(Note that the first two summands arise from the fact that the lengths of the corresponding rows are greater than $p$ and $p\ge II_j$ by \eqref{eq:sumII=P}.)
		Hence by~\refCl{claim:IIjHj}, we have for every $1\le j\le k-1$,
		\begin{equation}\label{eq:eachHj}
			h_j\ge \cII_j(\sum_{1\le t\le j}|A_{\cT_t}|+\sum_{0\le \ell <j}|A_{\cO_\ell}|)+\sum_{i:\: a_i\le \cII_j}a_i+\sum_{i:\:a_i>\cII_j, a_i\in D}\cII_j.
		\end{equation}
		Combining~\eqref{eq:sumHj} with~\eqref{eq:eachHj} and subtracting $\sum_{j=1}^{k-1}h_j$ from $\sum_{j=1}^k h_j$, we have
		\begin{equation}
			h_k< \cII_k(\sum_{t=1}^k|A_{\cT_t}|+\sum_{\ell=0}^{k-1}|A_{\cO_\ell}|)+ \sum_{i\in D}a_i - \sum_{j=1}^{k-1}(\sum_{i:\: a_i\le \cII_j}a_i+\sum_{i:\: a_i>\cII_j, a_i\in D}\cII_j).
		\end{equation}
		If we can show 
		\begin{equation}\label{eq:aiDrelation}
			\sum_{i\in D}a_i - \sum_{j=1}^{k-1}(\sum_{i:\: a_i\le \cII_j}a_i+\sum_{i:\: a_i>\cII_j, a_i\in D}\cII_j)\le \sum_{i:\: a_i\le \cII_k}a_i+\sum_{i:\: a_i>\cII_k, a_i\in D}\cII_k, 
		\end{equation} 
		then by~\refCl{claim:IIjHj}, 
		we have the sum of the lengths of the first $\cII_k$ rows of $Z_k$ is at most
		\[ h_k< \cII_k(\sum_{t=1}^k|A_{\cT_t}|+\sum_{\ell=0}^{k-1}|A_{\cO_\ell}|)+  \sum_{i:\: a_i\le \cII_k}a_i+\sum_{i:\: a_i>\cII_k, a_i\in D}\cII_k,\]
		where the right-hand side is the sum of the lengths of the first $\cII_k$ columns of $Z_k$ and we complete the proof.
		
		To see that~\eqref{eq:aiDrelation} is true, we consider an equivalent form
		\begin{equation}\label{aiDrelationequiv}
			\sum_{i\in D}a_i \le  \sum_{j=1}^{k}\sum_{i:\: a_i\le \cII_j}a_i+\sum_{j=1}^{k}\cII_j\sum_{i:\: a_i>\cII_j, a_i\in D}1.
		\end{equation}
		Indeed, for each $a_i$ counted by the left-hand side, if $a_i\le \max(\cII_1,\dots,\cII_k)$, then it appears at least once in the first summand on the right-hand side. If $p\ge a_i > \max(\cII_1,\dots,\cII_k)$, then it is counted $k$ times in the second summand of the right-hand side, which by~\eqref{eq:sumII=P} contribute $\sum_{j=1}^k\cII_j=p\ge a_i$ to the right-hand side. Therefore we prove~\eqref{aiDrelationequiv} and then~\eqref{eq:aiDrelation}, which completes the proof of the theorem.
	\end{proof}

	\section{$\ctau$ vs. $\cnu$ and some stronger questions}\label{sec:conjs}
	\begin{observation}
		$\tau(H(Y))=\nu(H(Y))$.
	\end{observation}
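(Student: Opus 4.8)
The plan is to push the equality down to König's theorem applied to the bipartite graph that encodes $Y$ itself. Let $G=G(Y)$ be the bipartite graph with vertex classes $R=R(Y)$ and $C=C(Y)$ in which $r_ic_j$ is an edge precisely when the cell $(i,j)$ belongs to $Y$, i.e.\ when $j\le a_i$; thus $G$ is nothing but the Young diagram read as a $0$--$1$ matrix. First I would show $\nu(H(Y))=\nu(G)$. Given a matching $M$ of $H(Y)$, no two of its edges lie in the same $H_i(Y)$ (they would share $r_i$), so $M$ uses pairwise distinct rows, pairwise distinct columns, and pairwise distinct symbols; hence sending $r_ic_js_k\in M$ to the pair $r_ic_j$ produces a matching of $G$ of size $|M|$ (note $j\le a_i$, so $r_ic_j\in E(G)$). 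Conversely, write a matching of $G$ as $\{r_ic_{j(i)}:i\in I\}$ with $j(i)\le a_i$ and the columns $c_{j(i)}$ distinct; then $\{\,r_i c_{j(i)} s_{j(i)} : i\in I\,\}$ is a matching of $H(Y)$ of the same size, because $j(i)\le a_i$ puts each such triple in $H_i(Y)$ and injectivity of $i\mapsto j(i)$ keeps the columns and the symbols distinct.

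Next I would observe $\tau(H(Y))\le\tau(G)$, via the same dictionary. If $B\subseteq R\cup C$ is a vertex cover of $G$, then the same set $B$, now regarded as a set of vertices of the hypergraph $H(Y)$ (legitimate since $R\cup C\subseteq V(H(Y))$), is a cover of $H(Y)$: an arbitrary edge $r_ic_js_k$ of $H(Y)$ has $j\le a_i$, so $r_ic_j\in E(G)$, whence $r_i\in B$ or $c_j\in B$, and either of these vertices lies in the edge $r_ic_js_k$.

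Finally, combining these two facts with König's theorem ($\tau(G)=\nu(G)$) and the general inequality $\nu\le\tau$ applied to the hypergraph $H(Y)$ yields
\[
\nu(H(Y))\ \le\ \tau(H(Y))\ \le\ \tau(G)\ =\ \nu(G)\ =\ \nu(H(Y)),
\]
so all four numbers coincide; in particular $\tau(H(Y))=\nu(H(Y))$.

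I do not anticipate a genuine difficulty; the statement is essentially a repackaging of König's theorem. The one subtlety worth flagging is directional: a cover of $H(Y)$ may in general also use symbol vertices, so one should \emph{not} try to prove $\tau(H(Y))\ge\tau(G)$ head-on; the displayed chain of inequalities sidesteps this and delivers $\tau(H(Y))\ge\tau(G)$ for free. (If one prefers an explicit optimal cover rather than the indirect argument, one can take, for an appropriate $0\le a\le a_1$, the set $\{c_1,\dots,c_a\}\cup\{r_i:a_i>a\}$, which is a cover of $H(Y)$ whose size matches $\nu(H(Y))$.)
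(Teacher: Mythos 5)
Your proposal is correct and matches the paper's own argument essentially step for step: the same bipartite graph $G=H[R\times C]$, the same lift of a matching $\{r_ic_{j}\}$ to triples $r_ic_js_j$ showing $\nu(H(Y))=\nu(G)$, and the same chain $\nu(H(Y))\le\tau(H(Y))\le\tau(G)=\nu(G)$ via K\"onig's theorem. No differences worth noting.
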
 
	\begin{proof}
		Let $G=H[R \times C]$ (as defined in~\refS{sec:prerequisites}). 
		We claim that $ \tau(H(Y))= \nu(H(Y))=\tau(G)$. 
		Clearly,   $ \tau(H(Y))\le \tau(G)$, and since $\nu(H(Y))\le \tau(H(Y))$, by K\"onig's theorem it suffices to show that $\nu(H(Y))=\nu(G)$. 
		As noted above, $ \nu(H(Y))\le \nu(G)$. 
		To show the converse, for any matching $M$  in $G$,  let $\tilde{M}=\{rc_js_j \mid rc_j \in M\}$, where $C=\{c_1, \ldots ,c_{a_1}\}$ and $S=\{s_1, \ldots ,s_{a_1}\}$. Then, by the definition of $H(Y)$, $\tilde{M}$ is  a matching in $H(Y)$, and $|\tilde{M}|=|M|$.
	\end{proof}
	
	If the wide diagram conjecture is true, then also $\ctau(H(Y))=\cnu(H(Y))$ whenever $Y$ is wide. 
	We do not know an example, wide or not, in which this integral duality relation fails. 
	
	\begin{question}\label{question:tau2=nu2}
		Is it true that $\ctau(H(Y))=\cnu(H(Y))$ for every Young diagram $Y$?
	\end{question}

	Based on~\refT{thm:widetau2=n}, solving \refCon{chowconj} is equivalent to solving this question, restricted to wide diagrams.

	\small
	\bibliographystyle{abbrv}

	\normalsize

	
	

\end{document}